\newtheorem{theorem}{Theorem}[section]
\newtheorem{lemma}[theorem]{Lemma}
\newtheorem{proposition}[theorem]{Proposition}
\theoremstyle{definition}
\newtheorem{definition}[theorem]{Definition}
\theoremstyle{remark}
\newtheorem{remark}[theorem]{Remark}
\newtheorem{example}[theorem]{Example}
\numberwithin{equation}{section}
\newcommand{\e}{\varepsilon}
\newcommand{\de}{\delta}
\newcommand{\omu}{\overline{\mu}}
\begin{document} 

\title{Milnor invariants of covering links}
\author{Natsuka Kobayashi, Kodai Wada and Akira Yasuhara}

\address{Tokyo Metropolitan Fujimori High School, Nagabusa, Hachioji, Tokyo 193-0824, Japan}
\email{ntk08g@gmail.com}

\address{Department of Mathematics, School of Education, Waseda University, Nishi-Waseda 1-6-1, Shinjuku-ku, Tokyo, 169-8050, Japan}
\email{k.wada@akane.waseda.jp}

\address{Department of Mathematics, Tokyo Gakugei University, 4-1-1 Nukuikita-machi, Koganei-shi, Tokyo, 184-8501, Japan}
\email{yasuhara@u-gakugei.ac.jp}

\subjclass[2010]{57M12, 57M25, 57M27}
\keywords{Milnor invariant; Covering linkage; Covering space; 
Cobordism; Link-homotopy; Claspers.}
\thanks{ The third author is partially supported by a Grant-in-Aid for Scientific Research (C) 
($\#$23540074) of the Japan Society for the Promotion of Science.}


\begin{abstract} 
We consider Milnor invariants for certain covering links as a generalization of covering linkage invariants 
formulated by R. Hartley and K.~Murasugi.
A set of Milnor invariants for covering links is a cobordism invariant of a link, 
and that  this invariant can distinguish some links 
for which the ordinary 
Milnor invariants coincide.  
Moreover, for a Brunnian link $L$, the first non-vanishing  
Milnor invariants of $L$ is modulo-$2$ congruent to a sum of  
Milnor invariants of covering links.
As a consequence, a sum of linking numbers of \lq iterated\rq~covering links gives the first non-vanishing Milnor invariant of $L$ modulo $2$.
\end{abstract}

\maketitle

\section{Introduction}
For a link $L$ in the $3$-sphere $S^{3}$, we consider a branched cover of $S^{3}$ 
branched over components of $L$. 
The set of linking numbers between  2-component sublinks of the preimage  $\widetilde{L}$ of $L$ 
had been recognized as a useful invariant for knots and links, see for example~\cite{BS}, \cite{L}, \cite{P} and \cite{R}.
R. Hartley and K.~Murasugi~\cite{HM} called this invariant the {\em covering linkage invariant}. 

J. Milnor~\cite{M}, \cite{M2} defined a family of invariants for a link 
indexed by sequences of integers in $\{1,2,\ldots,n\}$, where $n$ is the number of components of the link.
Since the Milnor invariant of a link for a length two sequence $ij$ coincides with the linking number of the $i$th and $j$th components of the link,
we could regard the Milnor invariants for sequences with the length at least 3 as a kind of higher order linking numbers.

So it seems to be natural to consider the Milnor invariants for (sublinks of) $\widetilde{L}$ as a generalization of covering 
linkage invariants. 
In fact, for a prime number $p$, T.D. Cochran and K.E. Orr~\cite{CO} defined \lq mod $p$ or $p$-adic versions\rq~
Milnor invariants for links in the $\mathbb{Z}_p$-homology 3-sphere. 
Their Milnor invariants can be also defined for 
$\widetilde{L}$ in the $p$-fold cyclic branched cover  
of $S^3$ branched over a component of $L$.  

In order to investigate the ordinary Milnor invariants for $\widetilde{L}$, we only consider a simple case as follows:
Let $L$ be a link with a trivial component $K$ such that the linking numbers between $K$ 
and the other components are even.  
For the double branched cover $\Sigma(K)$ branched over $K$, 
a link in $\Sigma(K)$ which consists of components of the preimage of each component of $L\setminus K$ 
is said to be a {\em covering link}.
Note that there are $2^{n-1}$ covering links in $\Sigma(K)$, where $n$ is the number of components of $L$.   
 (We remark that in \cite{CO}, they call the preimage $\widetilde{L}$ of $L$ \lq the\rq~ covering link.) 
 Since $\Sigma(K)$ is also $S^3$, we can define the ordinary Milnor invariants for 
 each covering link.

It is known that the Milnor invariants are cobordism invariants~\cite{C}.  
It is also true that a set of the  Milnor invariants for certain covering 
links are cobordism invariants of $L$ (Theorem~\ref{cobordism}). 
In~\cite{CO}, Cochran and Orr show that their invariants are ($p$-)cobordism invariants of
the {\em covering} link $\widetilde{L}$.

In~\cite{CO}, they do not make it clear if 
their Milnor invariants of the covering link are useful 
when the ordinary Milnor invariants are useless. 
Our invariants, the Milnor invariants of a covering link, can distinguish some links for which 
the ordinary Milnor invariants coincide, 
see Remark~\ref{remstrong}.

For a Brunnian link $L$ in $S^{3}$,
we show that 
the first non-vanishing  
Milnor invariants of $L$ is modulo-$2$ congruent to a sum of  
Milnor invariants of certain covering links (Theorem~\ref{thmod2}).
In~\cite{Murasugi}, Murasugi proved that Milnor invariants of a link are linking numbers in appropriate nilpotent covering spaces of $S^{3}$ branched over the link,
i.e., he described the exact correspondence between Milnor invariants and covering linkage invariants.
While it looks that our result is weaker than the result of Murasugi, 
the authors believe that the result is worth mentioning because  
it is hard to treat nilpotent covers in general.

\section{Milnor invariants}
\label{Milnor inv}
Let $L$ be an oriented ordered $n$-component link in $S^{3}$.
Let $p$ be $0$ or a prime number. 
Let $G$ be the fundamental group of the complement of $L$ and
$G_{q}^{p}$ a normal subgroup of $G$ generated by $[x,y]z^{p}$ for $x\in G,~y,z\in G_{q-1}^{p}$,
where $[x,y]$ is the commutator of $x$ and $y$,
and $G_{1}^{p}$ means $G$. 
(We remark that in \cite{S}, $G$ is defined to be $G^p_0$.) 

It is shown by similar to Theorem 4 in~\cite{M2} that 
the quotient group $G/G_{q}^{p}$ is isomorphic to a group with the following presentation:
\[
\langle \alpha_{1},\alpha_{2},\ldots,\alpha_{n}~\vline~[\alpha_{i},\lambda^q_{i}] (i=1,2,\ldots,n), F_{q}^{p}\rangle,
\]
where $\alpha_{i}$, $\lambda^q_{i}$ represent $i$th meridian and longitude of $L$ respectively and 
$F$ is a free group generated by $\alpha_{1},\alpha_{2},\ldots,\alpha_{n}$.
In particular, $\lambda^q_{i}$ can be chosen as a word in $\alpha_{1},\alpha_{2},\ldots,\alpha_{n}$
so that $\lambda^q_{i}=\lambda^0_i$.

We introduce the {\it Magnus $\mathbb{Z}_{p}$-expansion} of $\lambda_{j}^{q}$.
The Magnus $\mathbb{Z}_{p}$-expansion $E^{p}$ is an embedding homomorphism of $F$ to the formal power series ring in non-commutative variables $X_{1},X_{2},\ldots,X_{n}$ with $\mathbb{Z}_{p}$ coefficients defined by 
$E(\alpha_{i})=1+X_{i}$ and $E(\alpha_{i}^{-1})=1-X_{i}+X_{i}^{2}-X_{i}^{3}+\cdots~(i=1,2,\ldots,n)$~\cite[6.1 Lemma]{S}.
Then $E^{p}(\lambda_{j}^{q})$ can be written in the form
\[
E^{p}(\lambda_{j}^{q})=1+\sum_{k<q}\mu^{q}_L(i_{1}i_{2}\cdots i_{k}j)_{p}X_{i_{1}}X_{i_{2}}\cdots X_{i_{k}}
+\text{(terms of degree $\geq q$)},
\]
where a coefficient $\mu^{q}_L(i_{1}i_{2}\cdots i_{k}j)_{p}$ is defined for each sequence $i_{1}i_{2}\cdots i_{k}j$ of integers in $\{1,2,\ldots,n\}$.
Let $1+f$ be the Magnus $\mathbb{Z}_{p}$-expansion of an element of $F_{q}^{p}$.
Then the degree of any term in 
$f$ is at least $q$ (see~\cite[6.2 Lemma]{S}).
This implies that for $q<q'$
\[
\mu^{q}_L(i_{1}i_{2}\cdots i_{k}j)_{p}=\mu^{q'}_L(i_{1}i_{2}\cdots i_{k}j)_{p}.
\]
Taking $q$ sufficiently large,
we may ignore $q$ 
and hence denote $\mu^{q}_L(i_{1}i_{2}\cdots i_{k}j)_{p}$ by $\mu_L(i_{1}i_{2}\cdots i_{k}j)_{p}$.

For a sequence $I=i_{1}i_{2}\cdots i_{k}j~(k<q)$,
let $\Delta_{L}(I)_{p}$ be the ideal of $\mathbb{Z}_p$ generated by  $\mu(J)_{p}$'s, where $J$ is obtained from a proper subsequence of $I$ by permuting cyclicly.
Then the {\it Milnor invariant $\omu_{L}(I)_{p}$} is defined by 
\[
\omu_{L}(I)_{p}\equiv\mu_L(I)_{p}\mod{\Delta_{L}(I)_{p}}.
\]
The {\it length} of $\omu_{L}(I)_{p}$ means the length of $I$.

\begin{remark}\label{rem}
(1)~The ordinary Milnor invariant $\omu_L(I)$ given in ~\cite{M}, \cite{M2} is 
equal to $\omu_L(I)_0$. 
(In~\cite{M2}, $\Delta_{L}(I)_{0}$ is defined as the greatest common divisor of $\mu_{L}(J)_{0}$'s.)
When $p$ is a prime number, $\Delta_L(I)_p$ is equal to either $\{0\}$ or $\mathbb{Z}_p$ since 
$\mathbb{Z}_p$ is a field. 
Hence we essentially consider the first non-vanishing of $\omu_{L}(I)_{p}$ 
when $p$ is prime. 
Since $G_q^0\subset G_q^p$ for any prime number $p$, 
$\mu_L(I)_p$ is the modulo-$p$ reduction of $\mu_L(I)_0$. 
Taking $p$ sufficiently large, we may regard that 
$\omu_L(I)_p=\omu_L(I)_0$
if $\Delta_{L}(I)_{0}=\{0\}$. 

(2)~In order to state our results, we only need the original definition by Milnor \cite{M}, \cite{M2}. 
The definition in this section will be needed to apply a theorem of Stallings \cite[5.1 Theorem]{S} 
in the proof of Theorem~\ref{cobordism}.
\end{remark}

\section{Covering Milnor invariants}
Let $L=K_1\cup K_{2}\cup\cdots\cup K_{n+1}$ be an oriented $(n+1)$-component link in $S^3$
with $K_{n+1}$ is trivial and linking numbers of $K_{n+1}$ and $K_{i}$ are even for all $i=1,2,\ldots,n$.
Let $\Sigma(K_{n+1})$ be the double branched cover of $S^3$ branched over $K_{n+1}$ and $K^{\e}_{i}~(\subset \Sigma(K_{n+1}))$ a component of the preimage of $K_i$ $(\e\in\{0,1\}, i=1,2,\ldots,n)$.
Then we denote by $L(\e_1\e_2\cdots\e_n)$ a link $K^{\e_1}_1\cup K^{\e_2}_2\cup\cdots\cup K^{\e_n}_n~(\e_i\in\{0,1\})$ in $\Sigma(K_{n+1})$ 
and call it a {\em covering link} of $L$.
Since by the covering translation of $\Sigma(K_{n+1})$ there is an bijection from 
$\{L(0\e_2\cdots\e_n)~|~\e_i\in\{0,1\}\}$ to $\{L(1\de_2\cdots\de_n)~|~\de_i\in\{0,1\}\}$, 
we only consider the set $\{L(0\e_2\cdots\e_n)~|~\e_i\in\{0,1\}\}$ in this section 
(and the last section).
It is obvious that the set $\{L(0\e_2\cdots\e_n)~|~\e_i\in\{0,1\}\}$ is an invariant of $L$.
Note that $\Sigma(K_{n+1})$ is still $S^{3}$ because the branch set $K_{n+1}$ is trivial.
Hence Milnor invariants are naturally defined for covering links.
In particular for a sequence $I$ of $\{1,2,\ldots,n\}$, $M_{L}(I)_p=\{\omu_{L(0\e_2\cdots\e_n)}(I)_p~|~\e_i\in\{0,1\}\}$ is an invariant of $L$, 
where each element $i$ of $\{2,\ldots,n\}$ corresponds to the subindex of $K_{i}^{\e_{i}}$.
We call $M_{L}(I)_p$ the {\it covering $\mathbb{Z}_p$-Milnor invariant} of $L$. 
In particular, we denote $M_{L}(I)_0$ by $M_{L}(I)$ and call it the {\it covering Milnor invariant} of $L$. 
It is easy to see that $M_L(I)_p$ is an ambient isotopy invariant for each $p$ and 
$M_L(I)$ is the strongest invariant of them. 
But it is not clear whether each $M_L(I)_p$
is a cobordism invariant. 
Here, two $n$-component links $L=K_{1}\cup K_{2}\cup\cdots\cup K_{n}$ and $L'=K'_{1}\cup K'_{2}\cup\cdots\cup K'_{n}$ in $S^{3}$ are {\it cobordant} if there is a disjoint union of annuli $A_{1},A_{2},\cdots,A_{n}$ in $S^{3}\times [0,1]$ such that the boundary $\partial A_{i}=K_{i}\cup (-K'_{i})$ for each $i(=1,2,\ldots,n)$, 
where $-K'_{i}$ is the $i$th component $K'_{i}$ with the opposite orientation.
It is known that Milnor invariants of links are cobordism invariants~\cite{C}.
The same result holds for the first non-vanishing covering Milnor invariants as follows.

\begin{theorem}
\label{cobordism}
Let $L=K_{1}\cup K_{2}\cup\cdots\cup K_{n+1}$ be an oriented $(n+1)$-component link in $S^3$ 
with $K_{n+1}$ is trivial and linking numbers of $K_{n+1}$ and $K_{i}$ are even for $i=1,2,\ldots,n$.
For a sequence $I$, if $\Delta_{L(0\e_2\cdots\e_n)}(I)_0=\{0\}$  
for all $\e_{i}$ $(i\in\{2,\ldots,n\}, \e_{i}\in\{0,1\})$, 
then $M_{L}(I)$ is a cobordism invariant of $L$. That is, the first non-vanishing covering Milnor invariants are 
cobordism invariants. 
\end{theorem}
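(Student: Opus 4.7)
The plan is to lift a putative cobordism between $L$ and $L'$ to the double branched cover of $S^{3}\times[0,1]$ branched over the cobordism annulus for $K_{n+1}$, and then to argue in the style of Casson's proof, via Stallings' $5.1$ Theorem, that cobordism in this covering $4$-manifold preserves the first-non-vanishing Milnor invariants of the covering links. Concretely, suppose $L$ is cobordant to $L'$ via disjoint annuli $A_{1}\cup\cdots\cup A_{n+1}\subset S^{3}\times[0,1]$; since cobordism preserves linking numbers, the hypotheses imposed on $L$ (triviality of $K_{n+1}$ together with evenness of linking) carry over to $L'$. Let $\pi\colon\Sigma\to S^{3}\times[0,1]$ be the double branched cover over $A_{n+1}$. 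Since $K_{n+1}$ and $K'_{n+1}$ are trivial, $\partial\Sigma=\Sigma(K_{n+1})\sqcup\Sigma(K'_{n+1})=S^{3}\sqcup S^{3}$, so the covering links of both $L$ and $L'$ sit inside the two boundary components of $\Sigma$.

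Next I would show that each $A_{i}$ with $i\leq n$ lifts to a disjoint union of two annuli $A_{i}^{0},A_{i}^{1}\subset\Sigma$. This is the condition that the monodromy $\pi_{1}(A_{i})\to\mathbb{Z}/2$ coming from the covering is trivial, which is governed by the linking number in $S^{3}\times[0,1]$ of a core circle of $A_{i}$ with $A_{n+1}$, namely $\mathrm{lk}(K_{i},K_{n+1})$; this is even by hypothesis. After choosing the labels of the two sheets compatibly with the cobordism, for each $(\e_{2},\ldots,\e_{n})\in\{0,1\}^{n-1}$ the union $A_{1}^{0}\cup A_{2}^{\e_{2}}\cup\cdots\cup A_{n}^{\e_{n}}$ is an embedded disjoint union of annuli in $\Sigma$ giving a cobordism from $L(0\e_{2}\cdots\e_{n})$ to $L'(0\e_{2}\cdots\e_{n})$. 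It then suffices to prove that such a cobordism inside $\Sigma$ preserves $\omu(I)_{0}$, under the first-non-vanishing hypothesis $\Delta(I)_{0}=\{0\}$.

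For this final step I would apply Stallings' $5.1$ Theorem to the inclusions of the two end link exteriors into the exterior $E(C)\subset\Sigma$ of the lifted cobordism. Provided each inclusion is an isomorphism on $H_{1}$ and a surjection on $H_{2}$, Stallings delivers isomorphisms on the integral nilpotent quotients $G/G_{q}^{0}$, and the Magnus-expansion formalism of Section~\ref{Milnor inv} then forces the coefficients to agree at both ends; under $\Delta(I)_{0}=\{0\}$ there is no indeterminacy, and these coefficients are exactly the integer-valued $\omu(I)_{0}$. The main obstacle, and the essential new input beyond Casson's classical argument, is verifying these integral homological conditions in $\Sigma$, which is not $S^{3}\times[0,1]$ in general. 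My expected strategy is to combine a Mayer--Vietoris (or transfer) computation of $H_{*}(\Sigma;\mathbb{Z})$ coming from the branched covering structure with a careful analysis of the position of the lifted annuli $A_{i}^{\bullet}$; the parity hypothesis on $\mathrm{lk}(K_{i},K_{n+1})$ is precisely what controls the meridians of the $A_{i}^{\bullet}$ in $H_{1}$, enabling the needed homological matchup at both ends of the cobordism.
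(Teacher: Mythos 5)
Your setup is the same as the paper's: lift the cobordism to the double branched cover $W$ of $S^{3}\times[0,1]$ branched over $A_{n+1}$, use the evenness of $\mathrm{lk}(K_{i},K_{n+1})$ to split each $A_{i}$ into two lifted annuli, and then try to run a Stallings-type argument on the inclusions of the two end link exteriors into the exterior of the lifted cobordism. The gap is exactly at the point you flag as ``the main obstacle'': you propose to verify that these inclusions are \emph{integral} $H_{1}$-isomorphisms and $H_{2}$-surjections, but this is in general false. The branched cover $W$ (capped off appropriately) need not have trivial integral $H_{1}$ and $H_{2}$; what one can actually prove, via Lemma 4.2 of Casson--Gordon applied to $W\cup\Sigma$ (where $\Sigma$ is the double branched cover of $B^{4}$ over a pushed-in disk bounded by $K_{n+1}$), is only that the first and second \emph{rational} homology vanish, i.e.\ the Betti numbers are zero. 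Torsion can survive, and no amount of Mayer--Vietoris or transfer bookkeeping with the lifted annuli will remove it, so the integral hypotheses of Stallings' theorem are simply not available.

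The paper's resolution, which is the essential content missing from your proposal, is to trade the integral lower central series for the mod-$p$ one. Since the Betti numbers of $W\cup\Sigma$ vanish, for every sufficiently large prime $p$ one gets $H_{1}(W\cup\Sigma;\mathbb{Z}_{p})\cong H_{2}(W\cup\Sigma;\mathbb{Z}_{p})\cong\{0\}$ (Lemma~\ref{4mfdhom}); a Mayer--Vietoris argument with $D=\Sigma\cup N(\widetilde{\mathcal{A}})$ (which is contractible) and $E$ the cobordism exterior then yields the $\mathbb{Z}_{p}$-homological conditions needed for the mod-$p$ version of Stallings' theorem, giving isomorphisms of the quotients $\pi_{1}/(\pi_{1})_{q}^{p}$ and hence $M_{L}(I)_{p}=M_{L'}(I)_{p}$. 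Since this holds for all sufficiently large primes $p$, and since under the hypothesis $\Delta_{L(0\e_{2}\cdots\e_{n})}(I)_{0}=\{0\}$ the invariant $\mu(I)_{p}$ is the mod-$p$ reduction of the integer $\mu(I)_{0}$, the integral covering Milnor invariants agree. Without this mod-$p$/large-prime detour your argument does not close; with it, the ``careful analysis of the position of the lifted annuli'' you anticipate is not needed at all.
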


Let us first prove the following.

\begin{lemma}
\label{4mfdhom}
Let $W$ be a connected $4$-manifold with the first and second betti numbers are zero.
There is a prime number $p$ such that 
the first and second homology groups $H_{1}(W;\mathbb{Z}_{p})$, $H_{2}(W;\mathbb{Z}_{p})$ of $W$ with $\mathbb{Z}_{p}$ coefficients are trivial. Moreover, for any prime number $p'$ with $p'>p$, 
$H_{1}(W;\mathbb{Z}_{p'})\cong H_{2}(W;\mathbb{Z}_{p'})\cong\{0\}$.
\end{lemma}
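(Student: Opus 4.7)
The plan is to deduce the statement from the universal coefficient theorem together with the fact that, in the situation of interest, $H_1(W;\mathbb{Z})$ and $H_2(W;\mathbb{Z})$ are finite abelian groups. The latter holds because the lemma is to be applied to a compact cobordism $4$-manifold, so the integral homology is finitely generated; combined with the vanishing of the first and second Betti numbers, this forces both groups to be finite torsion. Denote their orders by $m_1$ and $m_2$, and let $\mathcal{P}$ be the (finite) set of primes dividing $m_1m_2$.

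For every prime $p\notin \mathcal{P}$ and every cyclic summand $\mathbb{Z}_m$ appearing in $H_1(W;\mathbb{Z})$ or $H_2(W;\mathbb{Z})$, we have $\gcd(m,p)=1$, hence $\mathbb{Z}_m\otimes\mathbb{Z}_p=0$ and $\operatorname{Tor}(\mathbb{Z}_m,\mathbb{Z}_p)=0$. By the universal coefficient theorem,
\[
H_i(W;\mathbb{Z}_p)\cong H_i(W;\mathbb{Z})\otimes\mathbb{Z}_p\,\oplus\,\operatorname{Tor}\bigl(H_{i-1}(W;\mathbb{Z}),\mathbb{Z}_p\bigr),
\]
so both $H_1(W;\mathbb{Z}_p)$ and $H_2(W;\mathbb{Z}_p)$ vanish. (For $i=1$ the $\operatorname{Tor}$ summand requires knowing that $H_0(W;\mathbb{Z})=\mathbb{Z}$ is torsion free, which is clear.)

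Therefore, taking $p$ to be any prime strictly larger than $\max\mathcal{P}$ (say the smallest such) proves the first assertion, and the ``moreover'' clause follows at once: any prime $p'>p$ also lies outside $\mathcal{P}$, so the same universal-coefficient computation yields $H_1(W;\mathbb{Z}_{p'})=H_2(W;\mathbb{Z}_{p'})=0$. The only non-cosmetic point is the finiteness of $H_1(W;\mathbb{Z})$ and $H_2(W;\mathbb{Z})$, which is the bridge between ``rank zero'' and ``torsion killed by a single prime'' — beyond this there is no real obstacle, since the rest is a mechanical application of the universal coefficient theorem.
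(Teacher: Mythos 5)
Your argument is correct and follows essentially the same route as the paper's proof: both reduce to the universal coefficient theorem and the observation that vanishing Betti numbers force $H_1(W;\mathbb{Z})$ and $H_2(W;\mathbb{Z})$ to be finite torsion, so that any sufficiently large prime kills both the tensor and $\operatorname{Tor}$ terms. You are merely more explicit than the paper about why such a prime exists (choosing $p$ outside the finite set of primes dividing the group orders) and about the implicit compactness/finite-generation hypothesis, which is a reasonable clarification rather than a different method.
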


\begin{proof}
Since the first and second betti numbers are zero by the hypothesis, 
there is 
a prime number $p$ 
such that
\[
\begin{array}{l}
H_{1}(W;\mathbb{Z})\otimes\mathbb{Z}_{p}
\cong H_{2}(W;\mathbb{Z})\otimes\mathbb{Z}_{p}
\cong \{0\}.
\end{array}
\]
Note that this is also true for any prime number which is greater than $p$. 

Since $W$ is connected,
$H_{0}(W; \mathbb{Z})\cong\mathbb{Z}$.
By the universal coefficient theorem for homology,
we have the following two short exact sequences:
\[
\begin{array}{l}
\{0\}\longrightarrow H_{1}(W;\mathbb{Z})\otimes\mathbb{Z}_{p}
\longrightarrow H_{1}(W;\mathbb{Z}_{p})
\longrightarrow Tor(\mathbb{Z},\mathbb{Z}_{p})
\longrightarrow \{0\},\\

\{0\}\longrightarrow H_{2}(W;\mathbb{Z})\otimes\mathbb{Z}_{p}
\longrightarrow H_{2}(W;\mathbb{Z}_{p})
\longrightarrow Tor(H_{1}(W;\mathbb{Z}),\mathbb{Z}_{p})
\longrightarrow \{0\},
\end{array}
\]
where $Tor(G_{1},G_{2})$ is the torsion product of abelian groups $G_{1}$ and $G_{2}$.
Now we have $Tor(\mathbb{Z},\mathbb{Z}_{p})\cong Tor(H_{1}(W;\mathbb{Z}),\mathbb{Z}_{p})\cong\{0\}$.
This completes the proof.
\end{proof}

\begin{proof}[Proof of Theorem~{\rm \ref{cobordism}}]
Let $L=K_1\cup K_{2}\cup\cdots\cup K_{n+1}$ (resp. $L'=K'_{1}\cup K'_{2}\cdots\cup K'_{n+1}$) be an oriented $(n+1)$-component link in $S^3$
with $K_{n+1}$ (resp. $K'_{n+1}$) is trivial and linking numbers of $K_{n+1}$ and $K_{i}$ (resp. $K'_{n+1}$ and $K'_{i}$) are even for $i=1,2,\ldots,n$.
Suppose that $L$ and $L'$ are cobordant,
i.e., there is a disjoint union of annuli $A_{1},A_{2},\ldots,A_{n+1}$ in $S^{3}\times [0,1]$ such that each boundary $\partial A_{i}=K_{i}\cup (-K'_{i})$ for $i=1,2,\ldots,n+1$.

Let $W$ be the double branched cover of $S^{3}\times [0,1]$ branched over $A_{n+1}$.
Fix $\e_{2},\ldots,\e_{n}$, we may assume that the covering links $L(0\e_{2}\cdots\e_{n})$ and $L'(0\e_{2}\cdots\e_{n})$
bound a disjoint union $\widetilde{\mathcal{A}}$ of annuli in $W$ that are components of the preimage of $A_{1}\cup A_{2}\cup\cdots\cup A_{n}$.
We note that $\partial W=\Sigma(K_{n+1})\cup(-\Sigma(K'_{n+1}))=S^{3}\cup(-S^{3})$. 
We may assume that $K_{n+1}$ is in the boundary of the 4-ball $B^4$ and bounds a properly embedded 2-disk in $B^4$ 
which is obtained from a disk in $\partial B^4$ bounded by $K_{n+1}$ by pushing it into $B^4$. 
 Let $\Sigma$ be the double branched cover of $B^4$ branched over the properly embedded disk. 
Let $N(\widetilde{\mathcal{A}})$ be a regular neighborhood of $\widetilde{\mathcal{A}}$, 
and $D=\Sigma\cup N(\widetilde{\mathcal{A}})$.
Let $E$ be the closure of  $W\setminus N(\widetilde{\mathcal{A}})$.
Then we have $D\cup E=W\cup \Sigma$
and note that  $D\cap E$ is homeomorphic to 
$\Sigma(K_{n+1})\setminus N(L(0\e_{2}\ldots\e_{n}))$.
Applying Lemma 4.2 in~\cite{CG} to $D\cup E$, we have that
the first and second homology groups of $D\cup E$ with rational coefficients are trivial.
Hence, by the universal coefficient theorem for homology,
we have the first and second betti numbers of $D\cup E$ are zero.
Lemma~\ref{4mfdhom} therefore implies that
there is a prime number $p$ such that
$H_{1}(D\cup E;\mathbb{Z}_{p})\cong H_{2}(D\cup E;\mathbb{Z}_{p})\cong\{0\}$.
By the Mayer-Vietoris exact sequence, we have the following:
\[
\begin{array}{l}
\cdots\longrightarrow H_{2}(D\cap E; \mathbb{Z}_{p})\longrightarrow H_{2}(D; \mathbb{Z}_{p})\oplus H_{2}(E; \mathbb{Z}_{p})\longrightarrow \{0\}\\
\longrightarrow H_{1}(D\cap E; \mathbb{Z}_{p})\longrightarrow H_{1}(D; \mathbb{Z}_{p})\oplus H_{1}(E; \mathbb{Z}_{p})\longrightarrow \{0\}\longrightarrow\cdots.
\end{array}
\]

Since $D$ is homotopic to a point,
we have that $H_{1}(D; \mathbb{Z}_{p})\cong H_{2}(D; \mathbb{Z}_{p})\cong\{0\}$.
Therefore the homomorphism $H_{1}(D\cap E; \mathbb{Z}_{p})\rightarrow H_{1}(E; \mathbb{Z}_{p})$ is a bijection and the homomorphism $H_{2}(D\cap E; \mathbb{Z}_{p})\rightarrow H_{2}(E; \mathbb{Z}_{p})$ is a surjection.
A theorem of Stallings~\cite[5.1 Theorem]{S} implies
that the inclusion map $D\cap E\rightarrow E$ induces the following isomorphism:
\[
\frac{\pi_{1}(D\cap E)}{(\pi_{1}(D\cap E))_{q}^{p}}
\overset{\cong}{\longrightarrow}
\frac{\pi_{1}(E)}{(\pi_{1}(E))_{q}^{p}}
\]
for any natural number $q$.
Hence 
the inclusion map $\Sigma(K_{n+1})\setminus L(0\e_{2}\cdots\e_{n})\rightarrow W\setminus \widetilde{\mathcal{A}}$ induces the following isomorphism: 
\[
\frac{\pi_{1}(\Sigma(K_{n+1})\setminus L(0\e_{2}\cdots\e_{n}))}{(\pi_{1}(\Sigma(K_{n+1})\setminus L(0\e_{2}\cdots\e_{n})))_{q}^{p}}
\overset{\cong}{\longrightarrow}
\frac{\pi_{1}(W\setminus \widetilde{\mathcal{A}})}{(\pi_{1}(W\setminus \widetilde{\mathcal{A}}))_{q}^{p}}.
\]
Similarly the inclusion map $\Sigma(K'_{n+1})\setminus L'(0\e_{2}\cdots\e_{n})\rightarrow W\setminus \widetilde{\mathcal{A}}$ implies that
\[
\frac{\pi_{1}(\Sigma(K'_{n+1})\setminus L'(0\e_{2}\cdots\e_{n}))}{(\pi_{1}(\Sigma(K'_{n+1})\setminus L'(0\e_{2}\cdots\e_{n})))_{q}^{p}}
\overset{\cong}{\longrightarrow}
\frac{\pi_{1}(W\setminus \widetilde{\mathcal{A}})}{(\pi_{1}(W\setminus \widetilde{\mathcal{A}}))_{q}^{p}}.
\]
It follows that we have
\[
\frac{\pi_{1}(\Sigma(K_{n+1})\setminus L(0\e_{2}\cdots\e_{n}))}{(\pi_{1}(\Sigma(K_{n+1})\setminus L(0\e_{2}\cdots\e_{n})))_{q}^{p}}
\cong
\frac{\pi_{1}(\Sigma(K'_{n+1})\setminus L'(0\e_{2}\cdots\e_{n}))}{(\pi_{1}(\Sigma(K'_{n+1})\setminus L'(0\e_{2}\cdots\e_{n})))_{q}^{p}}.
\]
Since $L(0\e_{2}\cdots\e_{n})$ and $L'(0\e_{2}\cdots\e_{n})$ bound $\widetilde{\mathcal{A}}$,
both peripheral structures of them are preserved  by the isomorphism above.
This implies that $M_L(I)_p=M_{L'}(I)_p$. We note that 
the equation hold for any prime number which is greater than $p$. 
This and the fact that $\mu_L(I)_p$ is the modulo-$p$ reduction of $\mu_L(I)_0$ (Remark~\ref{rem}~(1)) 
complete the proof.
\end{proof}

\section{Milnor invariants and covering Milnor invariants}
\label{Brunnian}
From now on we denote $\omu_L(I)_0$ by $\omu_L(I)$ as usual. 
A link is Brunnian if any proper sublink is trivial.

\begin{theorem}
\label{thmod2}
Let $L$ be an oriented  ordered $(n+1)$-component Brunnian link in $S^{3}$ $(n\geq 2)$.
For a non-repeated sequence $I=i_1i_2\cdots i_{n+1}$   
with $i_k=n+1~(2\leq k\leq n)$, 
\[\omu_L(I)\equiv
\sum_{(\e_1,\e_2,\ldots,\e_n)\in {\mathcal{E}(I) }}\omu_{L(\e_1\e_2\cdots\e_{n})}(I\setminus\{n+1\})
\mod{2},\]
where $\mathcal{E}(I)=\{(\e_1,\e_2,\ldots,\e_n)\in {\Bbb{Z}}_2^n~|~\e_{i_{k-1}}=\e_{i_{k+1}}=0\}$ and 
$I\setminus\{n+1\}$ is a subsequence of $I$ obtained by deleting $n+1(=i_k)$.
\end{theorem}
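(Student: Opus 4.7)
The plan is to combine Habiro's clasper calculus with an explicit Magnus-expansion computation of the longitude in the double branched cover, and to compare degree-$n$ coefficients on both sides of the identity modulo $2$.

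First, I would normalize $L$ and check the hypotheses. Since $L$ is Brunnian with $(n+1)$ components, its Milnor invariants of length at most $n$ vanish and $L$ is $C_{n+1}$-equivalent to a surgery of the trivial link along a disjoint union of simple tree claspers of degree $n$; both sides of the claimed identity are $C_{n+1}$-equivalence invariants (by a length count), so it suffices to verify the identity for a single-clasper surgery and extend by linearity. I would also record that each covering link $L(\e_1\cdots\e_n)$ is itself Brunnian as an $n$-link: every proper sublink of $L$ is trivial in $S^3$ and bounds disjoint $2$-disks disjoint from $K_{n+1}$, which lift to disjoint $2$-disks in $\Sigma(K_{n+1})=S^3$ bounding the corresponding sublink of $L(\e_1\cdots\e_n)$. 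Consequently each $\omu_{L(\e_1\cdots\e_n)}(I\setminus\{n+1\})$ is an honest integer with trivial indeterminacy.

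Next, I would set up the key computation. The longitude $\lambda_{i_{n+1}}^L$ lies in the $n$-th term of the lower central series of $\pi_1(S^3\setminus L)$, and $\omu_L(I)$ is the coefficient of $X_{i_1}\cdots X_{i_{k-1}}X_{n+1}X_{i_{k+1}}\cdots X_{i_n}$ in its Magnus expansion. The double branched cover corresponds to the index-$2$ subgroup consisting of words with even exponent sum in $m_{n+1}$, and the lifted longitude $\widetilde{\lambda}^{\,\e}$ of $K_{i_{n+1}}^{\,\e_{i_{n+1}}}$ in $\pi_1(\Sigma(K_{n+1})\setminus L(\e_1\cdots\e_n))$ is computed by sheet-tracking: starting from sheet $\e_{i_{n+1}}$, each letter $m_j^{\pm1}$ with $j\neq n+1$ traversed on sheet $s$ is sent to $\mu_j^{\pm1}$ when $s=\e_j$ and to the identity otherwise, while each $m_{n+1}^{\pm1}$ merely toggles the sheet without contributing a letter.

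Applying this substitution to the degree-$n$ part of $E(\lambda_{i_{n+1}}^L)$, the monomial with $X_{n+1}$ at position $k$ contributes, after lifting, to the coefficient of $Y_{i_1}\cdots Y_{i_{k-1}}Y_{i_{k+1}}\cdots Y_{i_n}$ in $E(\widetilde{\lambda}^{\,\e})$ precisely when the sheet sequence matches $\e_{i_\ell}$ at each position $\ell\neq k$. Using the covering-translation symmetry to normalize $\e_{i_{n+1}}=0$, and accounting for the single toggle at the $X_{n+1}$-position, this matching condition collapses to $\e_{i_{k-1}}=\e_{i_{k+1}}=0$, i.e.\ to $(\e_1,\ldots,\e_n)\in\mathcal E(I)$; the remaining $n-2$ coordinates of $\e$ impose no constraint. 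Summing the resulting coefficient equality over $\e\in\mathcal E(I)$ and reducing modulo $2$ yields the desired congruence. The main obstacle is the combinatorial bookkeeping of the sheet-tracking substitution on arbitrary degree-$n$ monomials: one must verify that monomials in $E(\lambda_{i_{n+1}}^L)$ other than the one indexed by $I$ — in particular those with $X_{n+1}$ at positions different from $k$, or with $X_{n+1}$ appearing more than once — contribute to the right-hand side with even multiplicity and thus vanish modulo $2$, leaving precisely the intended equality. Controlling this parity cancellation, rather than any topological subtlety, is the crux of the proof.
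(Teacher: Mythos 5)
Your proposal takes a genuinely different route from the paper (which lifts the claspers of a $C_n$-tree presentation of $L$ to the double branched cover via the Akbulut--Kirby surgery description and then counts surviving lifted trees), but as written it has a real gap at exactly the point you yourself flag as ``the crux.'' The sheet-tracking substitution is a reasonable way to compute the lifted longitude, and your observations that the covering links are Brunnian (so the length-$n$ invariants upstairs have no indeterminacy) and that one may reduce to a single degree-$n$ tree are sound. But the central combinatorial claim --- that the monomial of $E(\lambda_{i_{n+1}})$ indexed by $I$ contributes to the covering coefficients ``precisely when'' the sheet condition collapses to $\e_{i_{k-1}}=\e_{i_{k+1}}=0$, and that all other monomials contribute with even multiplicity --- is asserted, not proved. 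The difficulty is that the Magnus coefficient of a monomial is a sum over occurrences (increasing tuples of positions in the word representing the longitude), and the sheet of a given position is the parity of the $m_{n+1}$-exponent of the entire prefix, which depends on the word and not on the monomial being extracted. There may be arbitrarily many $m_{n+1}^{\pm1}$ letters interleaved among the positions $q_{k-1}<q_k<q_{k+1}$, so the clean occurrence-by-occurrence correspondence you describe between the $X_{n+1}$-at-position-$k$ monomial downstairs and the deleted-letter monomial upstairs does not hold; the required mod-$2$ cancellation of all the mismatched occurrences is precisely the content of the theorem, and Remark~\ref{bestpossible} shows it genuinely fails over $\mathbb{Z}$, so the parity argument cannot be soft. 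Until that cancellation is carried out (even just for the explicit commutator word of a single linear $C_n^d$-tree, which would suffice after your reduction), the proof is a plan rather than an argument.

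A secondary point: the reduction ``extend by linearity'' also needs justification on the right-hand side. The paper obtains it by showing that the covering construction turns the band-sum decomposition $L=O_{T_1}\sharp\cdots\sharp O_{T_m}\sharp O$ into a band-sum decomposition of each $L(\e_1\cdots\e_n)$ along the lifted claspers $G_r$, and then invoking Cochran's additivity of first non-vanishing invariants together with the Brunnian-ness of the covering links. The paper also needs an IHX manipulation (Lemma~\ref{lemIHX}) to arrange that the branch component $O_{n+1}$ never meets an end of a linear tree; if you pursue the Magnus route you should check whether your computation is insensitive to the position of $m_{n+1}$ in the tree, or whether a similar normalization is needed.
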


\begin{remark}
\label{bestpossible}
There is a $3$-component Brunnian link (see Figure~\ref{ex1}) such that $\omu_{L}(132)=-1$ and $\omu_{L(00)}(12)=1$.
Hence
Theorem~\ref{thmod2} 
does not hold without taking modulo $2$.
\end{remark}

\begin{remark}
Since the image of $L(\e_1\e_2\cdots\e_n)\setminus K^{\e_i}_i$ is a trivial link that bounds a 
disjoint union of disks in $S^3\setminus K_{n+1}$, 
$L(\e_1\e_2\cdots\e_n)$ is also a Brunnian link in $\Sigma(K_{n+1})(=S^{3})$.
Hence we can repeatedly apply Theorem~\ref{thmod2} to covering links.
Then we have that 
the length $n+1$ Milnor invariants for $L$ is modulo-$2$ congruent to a sum of linking numbers of 
\lq iterated' covering links.
\end{remark}

\section{Claspers}
We use clasper theory introduced by K. Habiro~\cite{H} to prove Theorem~\ref{thmod2}.
In this section, 
we briefly recall from \cite{H} the basic notions of clasper theory.
In this paper, we essentially only need the notion of $C_k$-tree.
For a general definition of claspers, we refer the reader to \cite{H}.
Let $L$ be a link in $S^{3}$.
\begin{definition}
An embedded disk $T$ in $S^{3}$ is called a {\it tree clasper} for $L$ if it satisfies the following (1), (2) and (3):\\
(1) $T$ is decomposed into disks and bands, called {\it edges}, each of which connects two distinct disks.\\
(2) The disks have either 1 or 3 incident edges, called {\it disk-leaves} or {\it nodes} respectively.\\
(3) $L$ intersects $T$ transversely and the intersections are contained in the union of the interior of the disk-leaves.\\
The {\it degree} of a tree clasper is the number of the disk-leaves minus 1.
(In \cite{H}, a tree clasper is called a {\it strict tree clasper}.)
A degree $k$ tree clasper is called a {\it $C_k$-tree}.
A $C_k$-tree is {\it simple} if each disk-leaf intersects $L$ at one point.
\end{definition}
We will make use of the drawing convention for claspers of Fig. 4 in~\cite{H}.
Given a $C_k$-tree $T$ for $L$, there is a procedure to construct a framed link $\gamma(T)$ in a regular neighborhood of $T$.
{\it Surgery along $T$} means surgery along $\gamma(T)$.
Since surgery along $\gamma(T)$ preserves the ambient space, surgery along the $C_k$-tree $T$ can be regarded as a local move on $L$ in $S^{3}$.
We will denote by $L_{T}$ the link in $S^{3}$ which is obtained from $L$ 
by surgery along $T$.
Similarly, for a disjoint union of tree claspers $T_1\cup T_{2}\cup\cdots \cup T_m$, we can define $L_{T_1\cup T_{2}\cup\cdots \cup T_m}$.
A $C_k$-tree $T$ having the shape of tree clasper like Figure~\ref{linear} is called a \textit{linear $C_k$}-tree, and the leftmost and rightmost disk-leaves of $T$ are called the \textit{ends} of $T$.
In particular, surgery along a simple linear $C_{k}$-tree for $L$ is ambient isotopic to a band summing of $L$ and the $(k+1)$-component Milnor link~(Fig.~7 in \cite{M}), see Figure~\ref{linear}.

\begin{figure}[htbp]
\begin{center}
 \begin{overpic}[width=120mm]{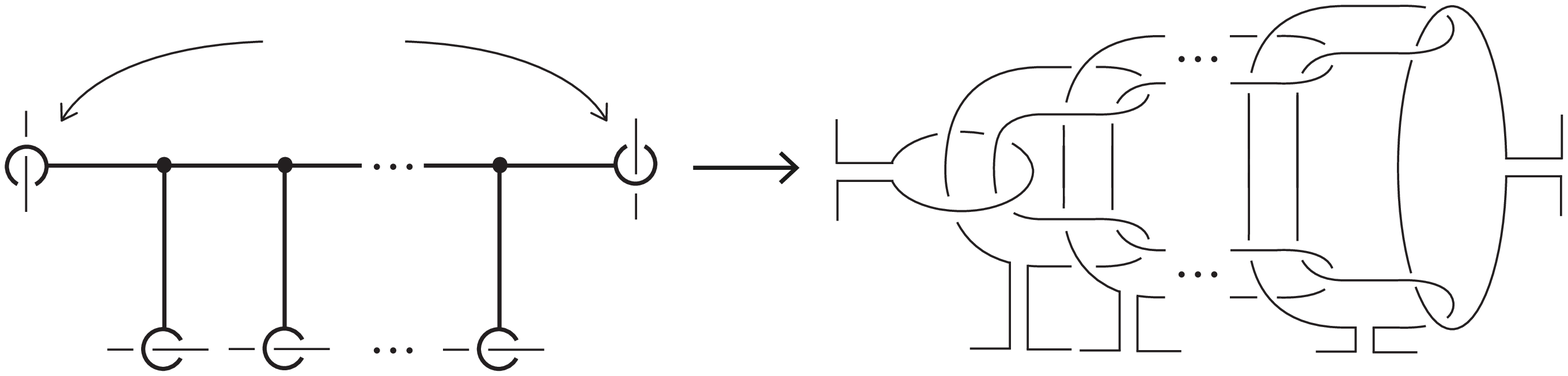}
   \put(64,73){ends}
   \put(146,55){surgery}
 \end{overpic}
 \caption{Surgery along a simple linear $C_{k}$-tree}
 \label{linear}
\end{center}
\end{figure}

\begin{definition}
A simple $C_k$-tree $T$ 
for $L=K_{1}\cup K_{2}\cup\cdots\cup K_{n}$ is a \textit{$C_k^d$-tree} if $|\{\ i\ | K_{i} \cap \ T \neq \emptyset, i=1,2,\ldots,n\}| =k+1$.
Let $T$ be a linear $C_k^{d}$-tree with the ends $f_0$ and $f_k$.
Since $T$ is a disk, we can travel from $f_0$ to $f_k$ along the boundary of $T$ so that we meet all other disk-leaves $f_1,\ldots,f_{k-1}$ in this order.
If $f_s$ intersects the $i_s$th component of $L$ $(s=0,\ldots,k)$, we can consider two vectors $(i_0,\ldots,i_k)$ and $(i_k,\ldots,i_0)$, and may assume that $(i_0,\ldots,i_k)$ $\leq$ $(i_k,\ldots,i_0)$, where \lq$\leq$' is the lexicographic order in $\mathbb{Z}^{k+1}$.
We call $(i_0,\ldots,i_k)$ the \textit{o-index} of $T$.
\end{definition}

The following theorem is essentially shown by Milnor~\cite{M}.
\begin{theorem}\cite[Section 5]{M}
\label{thMilnor}
Let $O=O_{1}\cup O_{2}\cup\cdots\cup O_{n+1}$ be an oriented $(n+1)$-component trivial link in $S^{3}$, and 
$T$ a linear $C_{n}^{d}$-tree for $O$ with the o-index $(i_{1},i_{2},\ldots,i_{n+1})$.
Then all Milnor invariants of $O_{T}$ with the length $\leq n$ vanish and 
for a non-repeated sequence $i_{1}j_{2}\cdots j_{n}i_{n+1}$ of $\{1,2,\ldots,n+1\}$
\[
\left|\omu_{O_{T}}(i_{1}j_{2}\cdots j_{n}i_{n+1})\right|=\Big\{
\begin{array}{rcl}
1&\textrm{if}~ j_2j_3\cdots j_{n}=i_2i_3\cdots i_{n},\\
0&\textrm{otherwise}.
\end{array}
\]
\end{theorem}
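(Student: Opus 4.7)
The plan is to reduce the statement to Milnor's original computation for his standard $(n+1)$-component link in~\cite[\S 5]{M}, whose longitudes are controlled iterated commutators of meridians.

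First, I would identify $O_T$ geometrically. By the drawing convention recalled in the paragraph before the theorem (and Figure~\ref{linear}), surgery on the trivial link $O$ along a simple linear $C_n^{d}$-tree $T$ inside a regular neighborhood of $T$ is ambient isotopic to band-summing $O$ with the $(n+1)$-component Milnor link. Because the components of $O$ away from $T$ bound disjoint disks, the resulting link $O_T$ is ambient isotopic to Milnor's link with its $s$-th strand relabeled by the o-index component $i_s$. Thus the computation is reduced to computing $\overline{\mu}$ for Milnor's link with a specified labeling.

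Second, I would verify that all Milnor invariants of length $\leq n$ vanish. The point is that $O_T$ is Brunnian: deleting the $i_s$-th component exposes a disk-leaf of $T$ that meets no other component of the remaining link, so by Habiro's calculus one can slide the tree off and the remaining sublink is isotopic to the corresponding sublink of $O$, which is trivial. Hence every proper sublink of $O_T$ is trivial and, by Milnor's indeterminacy, every $\overline{\mu}_{O_T}(J)$ with $|J|\le n$ vanishes; in particular the length $n+1$ invariants $\overline{\mu}_{O_T}(i_1 j_2\cdots j_n i_{n+1})$ are well-defined integers (the indeterminacy ideal is $\{0\}$).

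Third, I would pick explicit longitudes and compute their Magnus expansions. The linear structure of $T$ with o-index $(i_1,i_2,\ldots,i_{n+1})$ forces the longitude $\lambda_{i_1}$ to be represented, modulo $F_{n+1}^{0}$, by the nested commutator
\[
\bigl[\alpha_{i_2},[\alpha_{i_3},[\cdots,[\alpha_{i_n},\alpha_{i_{n+1}}]\cdots]]\bigr],
\]
where $\alpha_k$ is the meridian of $O_k$; this is the standard geometric computation of the longitude of a component met by the end leaf of a simple linear $C_n$-tree. A straightforward induction on $n$ shows that its Magnus expansion is
\[
1 \pm X_{i_2}X_{i_3}\cdots X_{i_n}X_{i_{n+1}} + (\text{other terms of degree } n \text{ not of the form } X_{?}\cdots X_{?}X_{i_{n+1}} \text{ with distinct letters}),
\]
so the coefficient of $X_{j_2}X_{j_3}\cdots X_{j_n}X_{i_{n+1}}$ in $E(\lambda_{i_1})$ equals $\pm 1$ precisely when $(j_2,\ldots,j_n)=(i_2,\ldots,i_n)$ and $0$ otherwise. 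Combined with the vanishing established above, this yields the claimed values of $|\overline{\mu}_{O_T}(i_1 j_2\cdots j_n i_{n+1})|$.

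The main obstacle is the third step: making precise the geometric argument that the longitude of the component hit by an end of a linear $C_n^{d}$-tree really is the nested commutator above. All the other steps are formal consequences of clasper calculus, the Magnus expansion, and the Brunnian property; the commutator identification is the content borrowed from Milnor's Section~5 and must be handled with care about conventions (orientation of leaves and the reading direction determined by the o-index) to get the correct sign and nesting order.
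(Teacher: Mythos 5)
The paper offers no proof of this statement at all---it is introduced with ``essentially shown by Milnor'' and a citation to \cite[Section 5]{M}---and your argument is exactly the intended derivation: identify $O_T$ with a band sum of the trivial link $O$ and the $(n+1)$-component Milnor link (which the paper itself records in the paragraph preceding the theorem), use the Brunnian property together with Habiro's leaf-sliding lemma to kill all invariants of length $\leq n$, and then invoke Milnor's longitude/Magnus-expansion computation for the length-$(n+1)$ coefficients. Your proposal is therefore correct and takes essentially the same route; the one step you rightly flag as nontrivial---that the longitude of an end component is the nested commutator of the remaining meridians in the o-index order, with only the single non-repeated monomial $X_{i_2}\cdots X_{i_n}X_{i_{n+1}}$ surviving among those ending in $X_{i_{n+1}}$---is precisely the content the citation to Milnor is meant to supply.
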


The following lemma is shown similarly to  Lemma~2.9 in~\cite{Meilhan}.
\begin{lemma}
\label{lemIHX}
Let $L$ be a link,
and let $T_{I}$, $T_{H}$ and $T_{X}$ be $C_{k}$-trees for $L$ 
which differ only in a small ball as illustrated in Figure~{\rm \ref{IHX}}.
Then there are $C_k$-trees $T'_H$ and $T'_X$ such that 
$L_{T'_{H}\cup T'_{X}}$ is ambient isotopic to  $L_{T_{I}}$, and  
$L\cup T'_{H}\cup T'_{X}$ is obtained from $L\cup T_{H}\cup T_{X}$ by 
changing crossings among edges of the claspers and $L$.
\end{lemma}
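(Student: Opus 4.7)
The plan is to follow the strategy of Meilhan~\cite{Meilhan}, which itself is a crossing-change-controlled version of Habiro's IHX relation for tree claspers. First, I would fix the local picture in the small ball of Figure~\ref{IHX}: $T_{I}$ has a distinguished interior edge $e$ connecting two trivalent nodes $v_{1}$ and $v_{2}$, while $T_{H}$ and $T_{X}$ are obtained from $T_{I}$ by contracting $e$ and then splitting the resulting $4$-valent vertex in the two other possible ways.

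The key step is the \emph{zip construction}. I would thicken the edge $e$ of $T_{I}$ into a narrow rectangular band, introduce a pair of cancelling disk-leaves along its boundary (Habiro's Move~2), and then apply Habiro's zip move to separate the resulting configuration into two disjoint $C_{k}$-tree claspers. A direct local analysis shows that, up to isotopy supported in a neighborhood of the small ball, the two trees produced have an H-shape and an X-shape; call them $T'_{H}$ and $T'_{X}$. Because Habiro's moves preserve the surgery result up to ambient isotopy, $L_{T'_{H}\cup T'_{X}}$ is ambient isotopic to $L_{T_{I}}$.

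It remains to compare $T'_{H}\cup T'_{X}$ with $T_{H}\cup T_{X}$. Both pairs consist of H- and X-shaped $C_{k}$-trees that agree outside the small ball and have matching disk-leaves on $L$, so the two pairs differ only by a homotopy of their edges in the complement of $L$. Any such homotopy of embedded bands can be realized by a finite sequence of crossing changes between edges of the claspers and between these edges and $L$, which is exactly the conclusion we need.

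The main obstacle I expect is the bookkeeping inside the zip construction: one has to verify that after inserting the cancelling pair and performing the zip, the two resulting trees genuinely have the H- and X-shapes specified by the figure, and to confirm that every auxiliary routing of edges introduced by the construction can indeed be undone by crossing changes of the allowed type. This is the content of Meilhan's Lemma~2.9, and since our target is only the identity modulo crossing changes of edges and $L$, there is enough flexibility to absorb these routings; the argument transfers to our setting of $C_{k}$-trees without essential change.
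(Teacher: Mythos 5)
Your proposal is correct and follows essentially the same route as the paper, which offers no independent argument but simply notes that the lemma "is shown similarly to Lemma 2.9 in \cite{Meilhan}"; your sketch is a faithful reconstruction of that argument (the zip construction producing the H- and X-shaped trees, with the residual edge routings absorbed by crossing changes among edges and $L$). The only detail worth keeping in mind when writing it out is the half-twist on the edge of $T_X$ indicated in Figure~\ref{IHX}, which accounts for the sign in the IHX relation, but this does not affect the structure of your argument.
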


\begin{figure}[htbp]
\begin{center}
 \begin{overpic}[width=60mm]{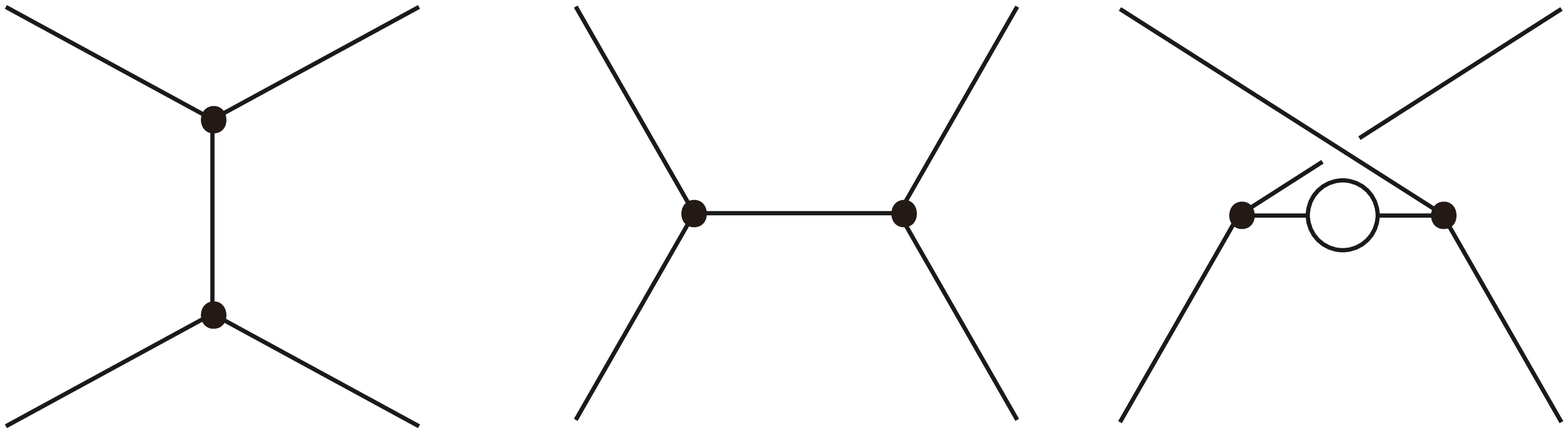}
 \put(18,-5){$T_{I}$}
 \put(82,-5){$T_{H}$}
 \put(141,-5){$T_{X}$}
 \put(144.1,21){$s$}
 \end{overpic}
 \caption{The IHX relation for $C_{k}$-trees}
 \label{IHX}
\end{center}
\end{figure}

\section{Proof of Theorem~\ref{thmod2}}
\label{proof}
Let $L=K_1\cup K_{2}\cup\cdots\cup K_{n+1}$ be an oriented $(n+1)$-component Brunnian link in $S^{3}$. 
Since for a non-repeated sequence $I=i_1i_2\cdots i_{n+1}$ in Theorem~\ref{thmod2}, the essential condition is that $n+1$ is neither 
$i_1$ nor $i_{n+1}$, we may assume that $i_1=1$ and $i_{n+1}=2$ to avoid complicated arguments in this section. 
 
Let $\Sigma(K_{n+1})$ be the double branched cover of $S^3$ branched over $K_{n+1}$. 
It is shown in~\cite{MY}, \cite{H2} that there is a disjoint union of 
linear $C_n^d$-trees $T_1\cup T_{2}\cup\cdots\cup T_m$ for an $(n+1)$-component trivial link $O=O_1\cup O_{2}\cup\cdots\cup O_{n+1}$ such that $L$ and 
$O_{T_1\cup T_{2}\cup\cdots\cup T_m}$ are ambient isotopic.
Consider induction on the length of the path connecting two disk-leaves of each $T_{r}$ grasping $O_{1}$ and $O_{2}$, 
Lemma~\ref{lemIHX} implies the following.

\begin{proposition}
Let $L$ be an $(n+1)$-component Brunnian link in $S^{3}$.
There is a disjoint union of 
linear $C_n^d$-trees $T_1\cup T_{2}\cup\cdots\cup T_m$ for an $(n+1)$-component trivial link $O=O_1\cup O_{2}\cup\cdots\cup O_{n+1}$ such that $L$ and 
$O_{T_1\cup T_{2}\cup\cdots\cup T_m}$ are ambient isotopic and 
that the ends of $T_{r}$ grasp $O_{1}$ and $O_{2}$ for each $r(=1,2,\ldots,m)$.
\end{proposition}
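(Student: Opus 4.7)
The plan is to start from the presentation furnished by \cite{MY} and \cite{H2}: since $L$ is Brunnian, we may write $L$ as ambient isotopic to $O_{T_1 \cup T_2 \cup \cdots \cup T_m}$ for some disjoint union of linear $C_n^d$-trees on the trivial $(n+1)$-component link $O=O_1\cup\cdots\cup O_{n+1}$. In each such $T_r$ the $n+1$ disk-leaves lie in a linear sequence $f_0^{(r)},f_1^{(r)},\ldots,f_n^{(r)}$ with $f_0^{(r)}$ and $f_n^{(r)}$ the ends, and because $T_r$ is of type $C_n^d$ there are unique disk-leaves of $T_r$ grasping $O_1$ and $O_2$. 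Let $\ell(T_r)\in\{1,\ldots,n\}$ denote the number of edges on the path in $T_r$ between these two distinguished leaves; we have $\ell(T_r)=n$ exactly when both of them are ends of $T_r$, which is the configuration we want to achieve.

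I would proceed by induction on the non-negative integer $\Lambda=\sum_{r=1}^{m}\bigl(n-\ell(T_r)\bigr)$. If $\Lambda=0$, the conclusion already holds. Otherwise pick some $T_r$ with $\ell(T_r)<n$; then at least one of the $O_1$- or $O_2$-leaf of $T_r$, say the $O_1$-leaf, is a middle leaf and so is attached to an internal trivalent node $v$ of $T_r$. Near $v$, apply the IHX relation of Lemma~\ref{lemIHX}, taking $T_I=T_r$ and choosing the local picture of Figure~\ref{IHX} so that the two edges being regrouped are the edge from $v$ to the $O_1$-leaf and the edge from $v$ toward the nearer end of the linear chain. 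The lemma produces two $C_n$-trees $T'_r$ and $T''_r$ (after certain crossing changes among clasper edges and $L$ that do not alter the ambient isotopy class of the surgered link) whose disjoint union may be substituted for $T_r$ in the presentation. A direct inspection of the $H$- and $X$-shapes shows that both $T'_r$ and $T''_r$ are again linear $C_n^d$-trees, with the $O_1$-leaf shifted one slot toward the end and the $O_2$-leaf undisturbed. Hence $\ell(T'_r)=\ell(T''_r)=\ell(T_r)+1$, so $\Lambda$ strictly decreases even though the total number of trees grows by one. Iterating finitely often drives $\Lambda$ to $0$ and yields the desired presentation.

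The main obstacle is the local combinatorial verification that IHX, applied at the carefully chosen node $v$, genuinely realises the single transposition of the $O_1$-leaf with the next leaf in the linear order while keeping $T'_r$ and $T''_r$ linear, $C_n^d$, and with the $O_2$-leaf fixed. This amounts to checking the effect of the three shapes $I$, $H$, $X$ in Figure~\ref{IHX} on the caterpillar structure of a linear $C_n$-tree and tracking which of the four incoming edges in the IHX picture carry the $O_1$-leaf, the $O_2$-leaf, and the left/right continuations of the linear chain; the remaining crossing changes supplied by Lemma~\ref{lemIHX} affect neither the combinatorial type of the claspers nor the ambient isotopy class of the surgered link, and so do not interfere with the induction.
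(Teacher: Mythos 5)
Your strategy coincides with the paper's own (its entire proof is the sentence preceding the Proposition: start from the presentation of \cite{MY}, \cite{H2} and induct on the length of the path joining the two disk-leaves grasping $O_{1}$ and $O_{2}$, via Lemma~\ref{lemIHX}), but two of your concrete claims fail. First, it is not true that both trees produced by Lemma~\ref{lemIHX} are again linear. Write the linear tree $T_{r}$ as a caterpillar with spine $v_{1}-v_{2}-\cdots-v_{n-1}$, leaves $f_{0},f_{1}$ at $v_{1}$, $f_{i}$ at $v_{i}$ for $2\leq i\leq n-2$, and $f_{n-1},f_{n}$ at $v_{n-1}$, and suppose the $O_{1}$-leaf is $f_{j}$. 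Applying IHX at the spine edge $v_{j-1}v_{j}$ regroups the four subtrees $A$ (containing $f_{0},\ldots,f_{j-2}$), $B=f_{j-1}$, $C=f_{j}$, $D$ (containing $f_{j+1},\ldots,f_{n}$). The $H$-term $((A,C),(B,D))$ is indeed linear with $f_{j}$ transposed one slot, but the $X$-term $((A,D),(B,C))$ has a node adjacent to the root of $A$, to the root of $D$, and to a cherry carrying $f_{j-1}$ and $f_{j}$; as soon as $A$ and $D$ both contain nodes (i.e.\ $3\leq j\leq n-2$, which occurs for $n\geq 5$) this is a branched, non-linear tree, and your induction, which is framed entirely inside the class of linear trees, cannot continue. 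Second, even granting linearity, your measure $\Lambda=\sum_{r}(n-\ell(T_{r}))$ does not strictly decrease: replacing one tree of deficiency $n-\ell$ by two trees of deficiency $n-\ell-1$ changes $\Lambda$ by $(n-\ell)-2$, which is nonnegative once $n-\ell\geq 2$.

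Both defects are repairable, and the repair is how the paper's one-line proof should be read. Run the induction over arbitrary simple degree-$n$ trees (not only linear ones), with measure the number $d$ of edges on the path $P$ joining the $O_{1}$- and $O_{2}$-leaves, and apply Lemma~\ref{lemIHX} not at a spine edge but at the edge attaching to $P$ a hanging subtree $S$ with more than one leaf: if $S$ has root subtrees $S_{1},S_{2}$ and the two subtrees of the attaching node inside $P$ are $P_{a}$ and $P_{b}$, then both outputs $((P_{a},S_{1}),(P_{b},S_{2}))$ and $((P_{a},S_{2}),(P_{b},S_{1}))$ lengthen the $O_{1}$--$O_{2}$ path by exactly one edge. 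Termination then follows from the multiset order (each tree is replaced by two trees of strictly smaller deficiency $n-d$), and when $d=n$ the path passes through all $n-1$ nodes with exactly one leaf hanging at each, so the tree is automatically linear with the $O_{1}$- and $O_{2}$-leaves as its ends. The remaining points you make (uniqueness of the distinguished leaves for a $C_{n}^{d}$-tree, preservation of simplicity and of the $C^{d}$ condition under IHX, and the irrelevance of the crossing changes supplied by Lemma~\ref{lemIHX} to the combinatorial type of the claspers) are correct.
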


\begin{figure}[htbp]
\begin{center}
\begin{tabular}{c}
\begin{minipage}{0.5\hsize}
\begin{center}
 \begin{overpic}[width=45mm]{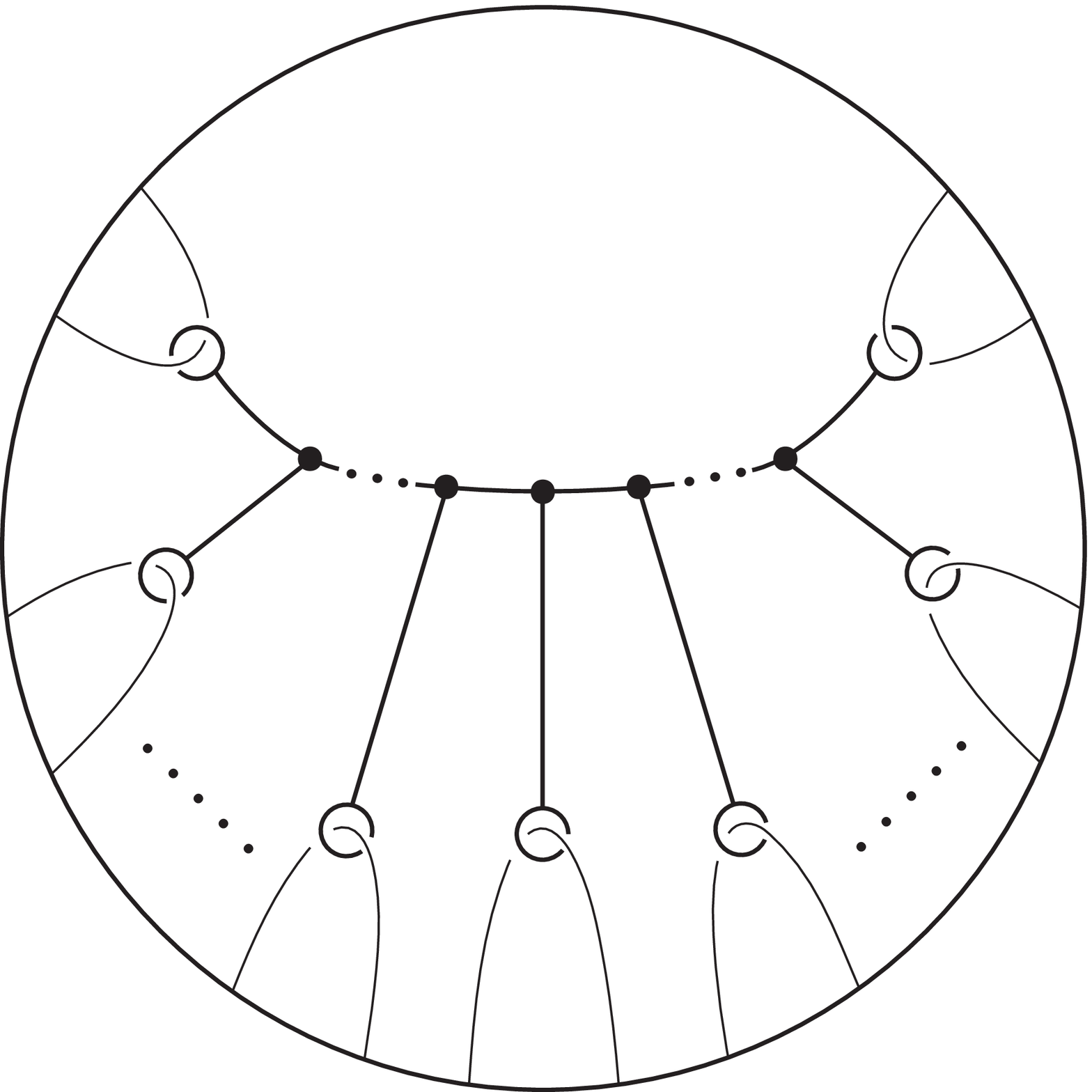}
   \put(60,78){$T_{r}$}
   \put(60,131){$B_{r}$}
   \linethickness{0.5pt}
   \put(142,59){\vector(1,0){30}}
   \put(142,63){move $9$}
   \put(-3,100){$O_{1}$}
   \put(118,100){$O_{2}$}
   \put(59,-20){(a)}
   \put(62,-9){$\alpha (\subset O_{n+1})$}
 \end{overpic}
\end{center}
\end{minipage}
\begin{minipage}{0.5\hsize}
\begin{center}
 \begin{overpic}[width=45mm]{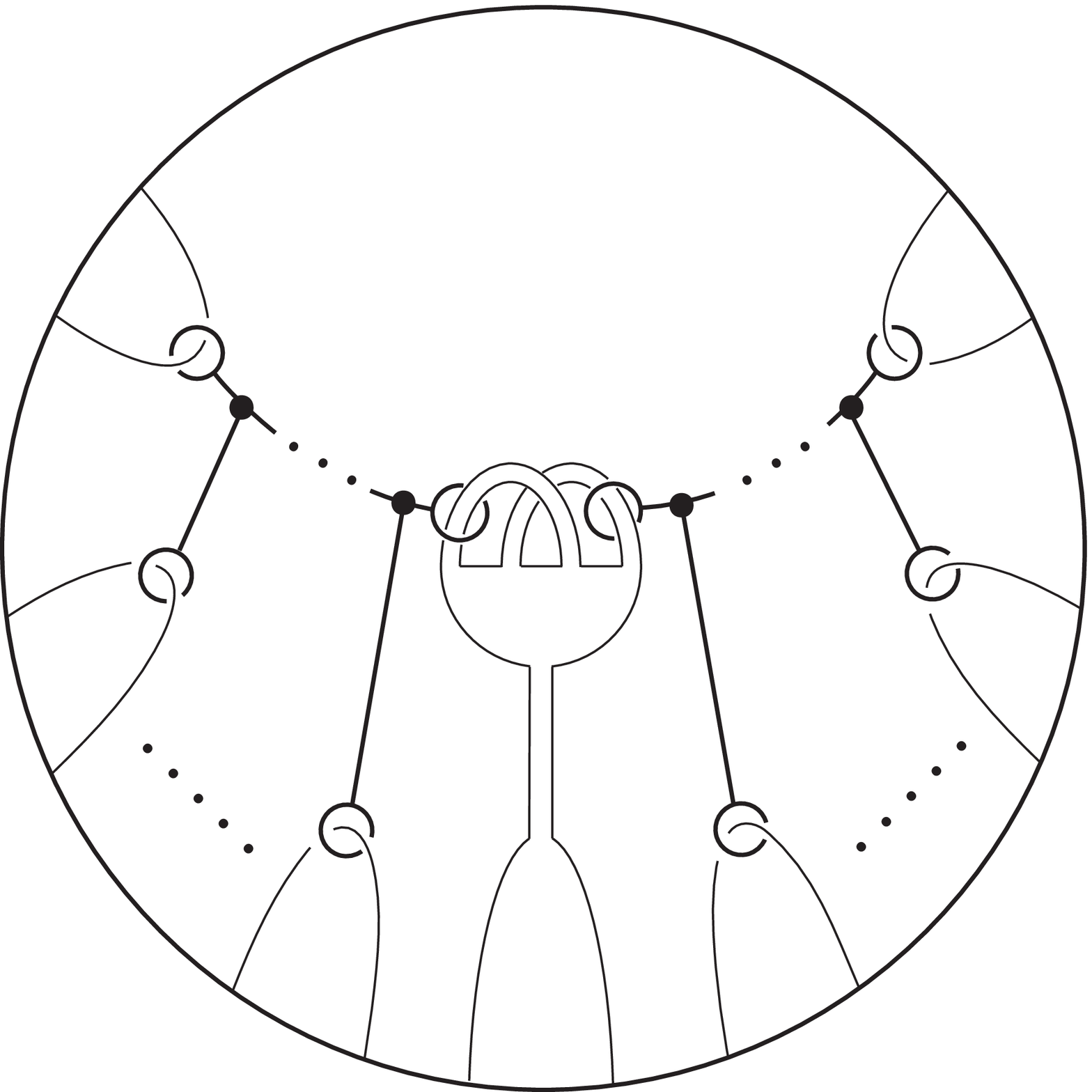}
 \put(60,10){$F$}
   \put(60,131){$B_{r}$}
   \put(35,81){$T_{r1}$}
   \put(81,81){$T_{r2}$}
   \put(-3,100){$O_{1}$}
   \put(118,100){$O_{2}$}
   \put(57.5,-20){(b)}
 \end{overpic}
\end{center}
\end{minipage}
\end{tabular}
\end{center}
\caption{}
 \label{F4}
\end{figure}

We identify $K_{n+1}$ and $O_{n+1}$. 
In the following, we will observe that the preimage of $K_1\cup K_{2}\cup\cdots\cup K_n$ is obtained from the preimage of 
$O_1\cup O_{2}\cup\cdots\cup O_n$ by surgery along claspers. 
There is a disjoint union of 3-balls $B_1\cup B_{2}\cup\cdots\cup B_m$ in $S^3$ such that 
$B_r\cap(T_1\cup T_{2}\cup\cdots\cup T_m)=T_r$ and 
$B_{r}\cap O$ is a trivial tangle.
Let $\alpha=B_r\cap O_{n+1}$, see Figure~\ref{F4} (a).
We consider the double branched cover of $B_r$ branched over $\alpha$.
By using move 9 in \cite{H}, we decompose $T_r$ into two tree claspers $T_{r1}$ and $T_{r2}$ 
(see Figure~\ref{F4} (b)), where $T_{rj}$ intersects $O_j$ $(j=1,2)$. 
Recall that the ends of $T_r$ grasp $O_1$ and $O_2$. 
Set $O_{j_{s}}^{\de_{s}}$ 
(resp. $O_{j}^{\e_{j}}$) 
be a preimage of $O_{j_{s}}$ 
(resp. $O_{j}$)
for $s=1,2,\ldots,n-2, \de_{s}(=\e_{j_s})\in\mathbb{Z}_{2}$ 
(resp. for $j=1,2, \e_{j}\in\mathbb{Z}_{2}$).
By using genus-1 surface $F$ as illustrated in Figure~\ref{F4}~(b), we have a surgery description of 
the double branched cover as illustrated in Figure~\ref{F5}~\cite{AK}. 
Let $T_{r1}^{\e}$ (resp. $T_{r2}^{\e}$) be a preimage of $T_{r1}$ with 
$T_{r1}^{\e}\cap O_{j_{l}}^{\e}\neq\emptyset$ (resp. $T_{r2}^{\e}\cap O_{j_{l+1}}^{\e}\neq\emptyset$) for $\e\in\{0,1\}$.
Then we have a new clasper $G_r$ with two boxes, see Figure~\ref{F6}.
Here a {\it box}, given in~\cite{H}, is a disk with $3$ incident edges 
which is obtained by replacing a disjoint union of $3$ disk-leaves as illustrated in Figure~\ref{box}.

\begin{figure}[htbp]
\begin{center}
 \begin{overpic}[width=90mm]{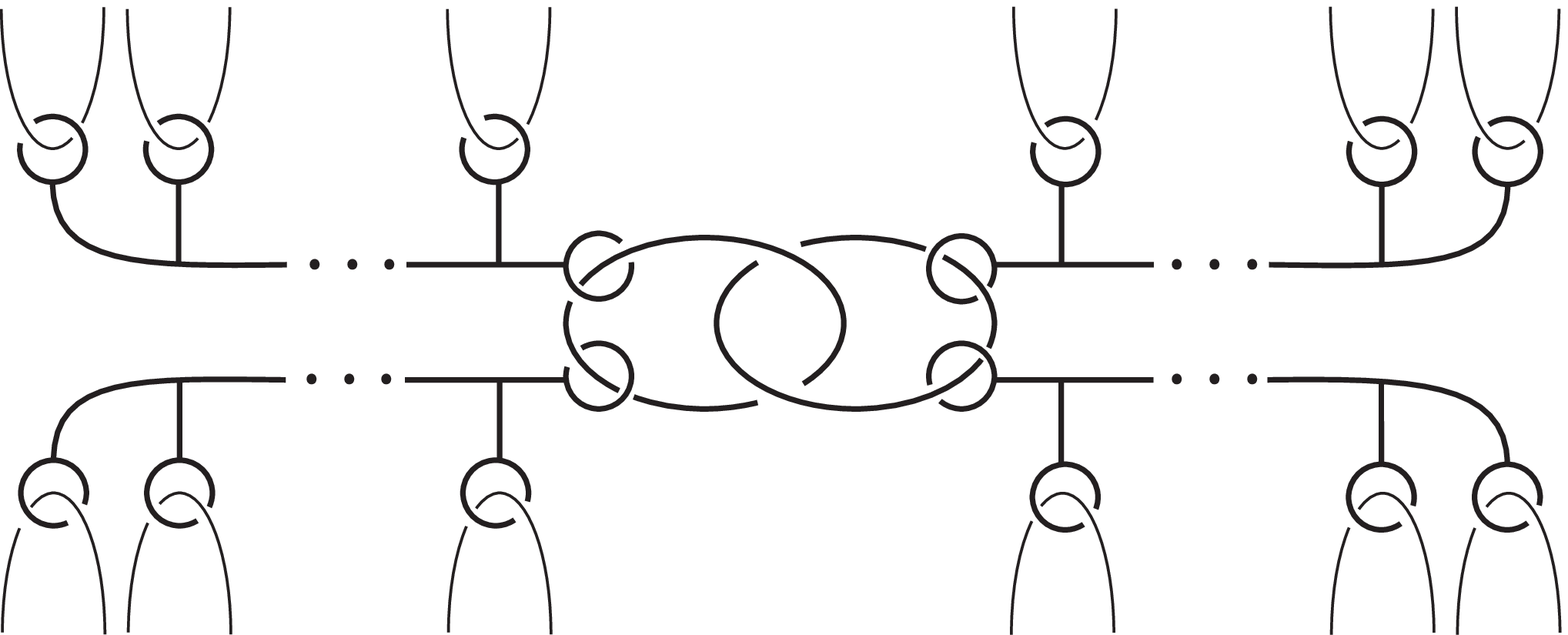}
   \put(-16,95){$O_{1}^{\e_{1}}$}
   \put(39,95){$O_{j_{1}}^{\de_{1}}$}
   \put(92,95){$O_{j_{l}}^{\de_{l}}$}
   \put(140,95){$O_{j_{l+1}}^{\de_{l+1}}$}
   \put(188,95){$O_{j_{n-2}}^{\de_{n-2}}$}
   \put(257,95){$O_{2}^{\e_{2}}$}
   \put(-25,3){$O_{1}^{\e_{1}+1}$}
   \put(39,3){$O_{j_{1}}^{\de_{1}+1}$}
   \put(91,3){$O_{j_{l}}^{\de_{l}+1}$}
   \put(133,3){$O_{j_{l+1}}^{\de_{l+1}+1}$}
   \put(183,3){$O_{j_{n-2}}^{\de_{n-2}+1}$}
   \put(257,3){$O_{2}^{\e_{2}+1}$}
   \put(113,67){$0$}
   \put(138,67){$0$}
   \put(-9,60){$T_{r1}^{\de_{l}}$}
   \put(248,60){$T_{r2}^{\de_{l+1}}$}
   \put(-9,37){$T_{r1}^{\de_{l}+1}$}
   \put(248,37){$T_{r2}^{\de_{l+1}+1}$}
 \end{overpic}
 \caption{A surgery description of the double branched cover}
 \label{F5}
\end{center}
\end{figure}
\begin{figure}[htbp]
\begin{center}
\begin{overpic}[width=90mm]{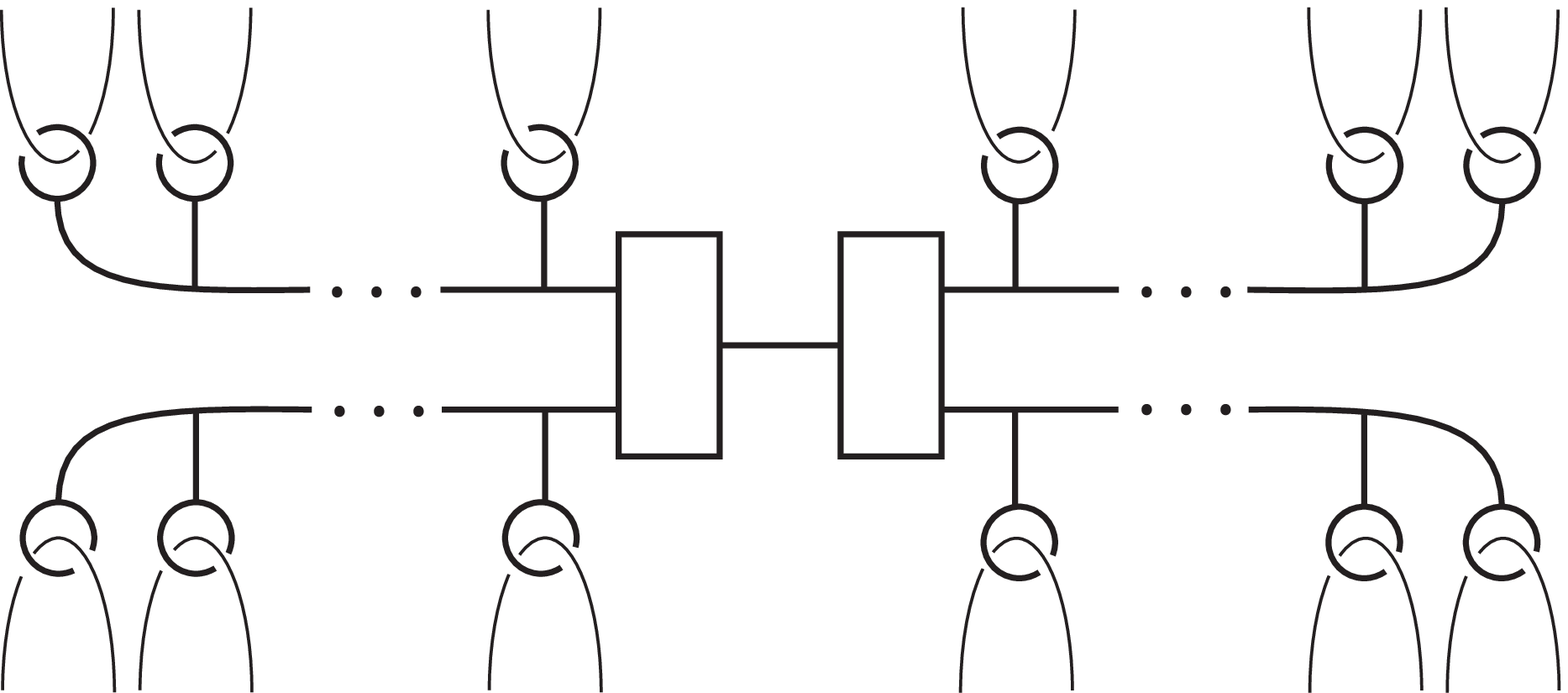}
   \put(-16,103){$O_{1}^{\e_{1}}$}
   \put(43,103){$O_{j_{1}}^{\de_{1}}$}
   \put(100,103){$O_{j_{l}}^{\de_{l}}$}
   \put(131,103){$O_{j_{l+1}}^{\de_{l+1}}$}
   \put(185,103){$O_{j_{n-2}}^{\de_{n-2}}$}
   \put(257,103){$O_{2}^{\e_{2}}$}
   \put(-25,3){$O_{1}^{\e_{1}+1}$}
   \put(44,3){$O_{j_{1}}^{\de_{1}+1}$}
   \put(99,3){$O_{j_{l}}^{\de_{l}+1}$}
   \put(124,3){$O_{j_{l+1}}^{\de_{l+1}+1}$}
   \put(179,3){$O_{j_{n-2}}^{\de_{n-2}+1}$}
   \put(257,3){$O_{2}^{\e_{2}+1}$}
   \put(-10,66){$T_{r1}^{\de_{l}}$}
   \put(248,66){$T_{r2}^{\de_{l+1}}$}
   \put(-10,41){$T_{r1}^{\de_{l}+1}$}
   \put(248,41){$T_{r2}^{\de_{l+1}+1}$}
   \put(122,78){$G_{r}$}
\end{overpic}
 \caption{}
 \label{F6}
\end{center}
\end{figure}

\begin{figure}[htbp]
\begin{center}
 \begin{overpic}[width=60mm]{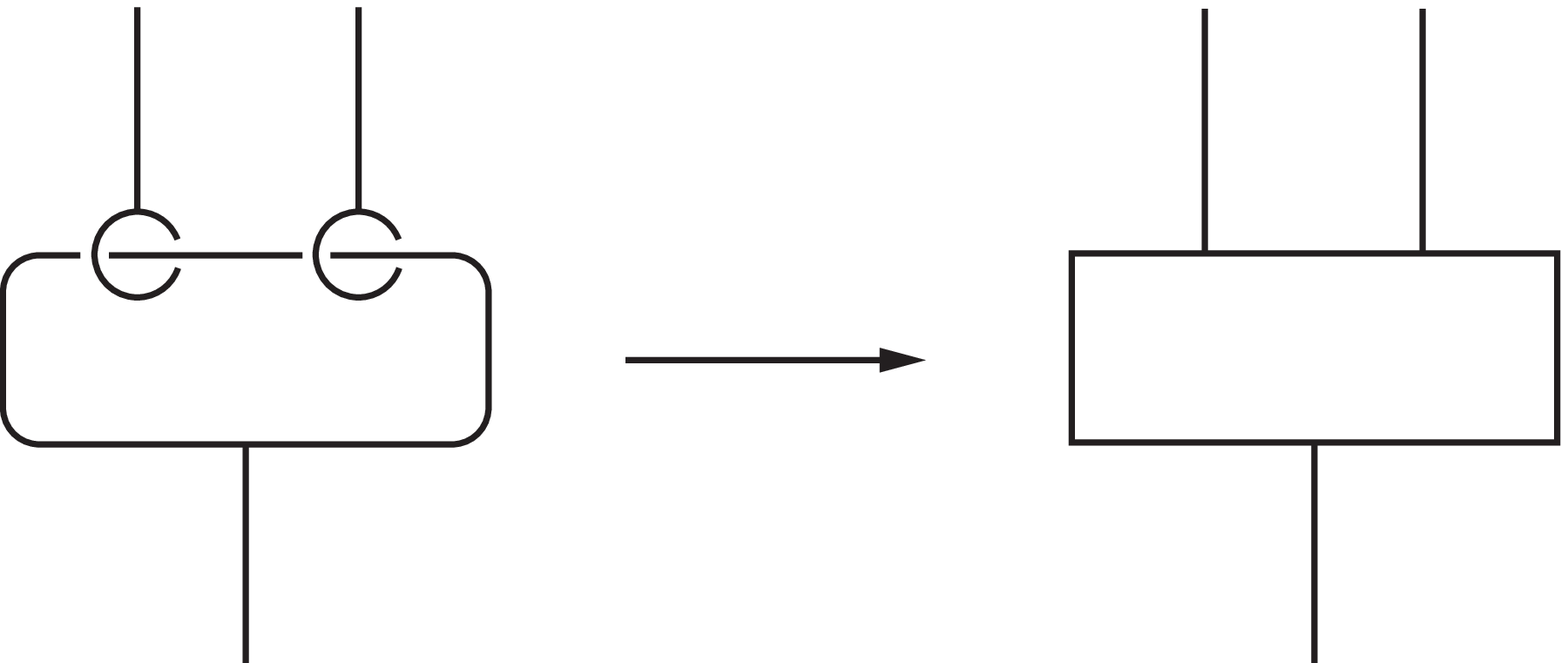}
 \end{overpic}
 \caption{A box}
 \label{box}
\end{center}
\end{figure}

The following lemma is shown by Habiro \cite[Proposition 3.4]{H}.
 
\begin{lemma}\cite[Proposition 3.4]{H}
\label{prop3.4}
Let $T$ be a tree clasper for a link $L$ such that 
$T$ has a disk-leaf without intersecting $L$. Then $L_T$ is ambient isotopic to $L$.
 \end{lemma}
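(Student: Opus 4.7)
The plan is to use Habiro's explicit description of the surgery framed link $\gamma(T)$ associated to the tree clasper $T$, together with a Kirby-calculus argument, to show that the contribution of the ``empty'' disk-leaf $f$ can be cancelled away from $L$.

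First I would use the hypothesis $f \cap L = \emptyset$ to isotope the clasper $T$ in $S^{3} \setminus L$ so that the disk $f$, the edge $e$ attached to $f$, and a short collar of $e$ all lie inside a 3-ball $B$ that is disjoint from $L$. This is possible because $f$ is an embedded disk in $S^{3} \setminus L$, so it can be shrunk freely along the edge while the rest of $T$ stays put outside $B$; after this move all disk-leaves of $T$ other than $f$ and their attached portions of edges remain outside $B$ and hence continue to intersect $L$ as before.

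Next I would appeal to Habiro's construction of $\gamma(T)$: the components of $\gamma(T)$ coming from the disk-leaf $f$ and from the edge $e$ form, inside $B$, a $0$-framed Hopf-like pair, while every other component of $\gamma(T)$ lies outside $B$ and interacts with this pair only through the clasp structure already contained in $B$. Since $B \cap L = \emptyset$, the local structure near $f$ does not link $L$ at all. I would then perform a Kirby handle-slide/cancellation over the $0$-framed unknot coming from $f$: it splits off a $0$-framed unknot inside $B$, which can be deleted as a cancelling $1$-/$2$-handle pair in $S^{3} \setminus L$. The effect on $L$ is trivial because the entire manipulation takes place in $B$, which is disjoint from $L$.

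The residual surgery description corresponds to a ``smaller'' clasper $T'$ obtained by amputating the $f$-branch of $T$; in the base case where $T$ is a $C_{1}$-tree, $\gamma(T)$ is just this Hopf pair and the argument terminates after one cancellation, while for higher degree an iteration (or a direct induction on the degree of $T$) finishes the proof. The main obstacle I anticipate is the careful bookkeeping of framings in Habiro's construction of $\gamma(T)$: one must verify that the local picture near $f$ inside $B$ really is a cancelling handle pair in the induced handle decomposition, so that the Kirby move is legal and leaves $L$ fixed up to ambient isotopy. This is where the assumption $f \cap L = \emptyset$ does all of the work, by guaranteeing that the cancellation takes place in a region into which $L$ never enters.
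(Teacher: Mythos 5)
The paper gives no proof of this lemma at all: it is quoted verbatim as Proposition 3.4 of Habiro's paper \cite{H}, so the only thing to compare your argument with is Habiro's own proof, which runs by decomposing $T$ into basic claspers and repeatedly applying his move 1 (a basic clasper one of whose leaves bounds a disk disjoint from everything else can be deleted). Your opening moves are consistent with that: the disk-leaf $f$ with $f\cap L=\emptyset$ produces in $\gamma(T)$ a $0$-framed unknot whose spanning disk (essentially $f$ itself) meets nothing but its partner component, once; since no other component of $\gamma(T)$ and no strand of $L$ passes through that disk, the pair cancels with no handle slides, and this is exactly what protects $L$. (Two small inaccuracies here: edges of $T$ do not contribute their own components to $\gamma(T)$ -- the partner of the $f$-component is one of the Borromean-type rings created at the adjacent node -- and that partner is \emph{not} contained in your ball $B$, so ``the entire manipulation takes place in $B$'' is not literally true; what matters is only that the spanning disk of the $f$-component is clean.)

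The genuine gap is in your iteration step. You assert that after one cancellation the residual surgery description is $\gamma(T')$ for the tree $T'$ obtained by ``amputating the $f$-branch.'' If amputation means deleting $f$ and its edge and smoothing the adjacent node, then $T'$ is a tree of degree $k-1$ \emph{all of whose disk-leaves meet $L$}, and surgery along such a tree is emphatically not trivial; your induction would then terminate at $L_T\cong L_{T'}\neq L$, i.e.\ it proves the wrong statement. What actually happens is that deleting the $f$-pair destroys one ring of the Borromean configuration at the adjacent node, whereupon one of the two surviving rings becomes a $0$-framed unknot bounding a disk (inside the node ball, hence disjoint from $L$) that meets the remaining surgery link in exactly one point; so the cancellation \emph{propagates through the node}, and after $\deg T$ such cancellations all of $\gamma(T)$ has been deleted. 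The correct inductive invariant is therefore ``some component of the residual surgery link is a $0$-framed unknot bounding a disk disjoint from $L$ and meeting the rest of the surgery link in at most one point,'' not ``the residual tree has a leaf missing $L$.'' You need to verify that this invariant is preserved at each node (Borromean rings minus one component form an unlink split inside the node ball), which is precisely the content of Habiro's moves 1 and 2; once that is in place your argument closes up.
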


\begin{lemma}
\label{lemmod2}
Let $T_{r}$ be a linear $C_{n}^{d}$-tree for $O=O_{1}\cup O_{2}\cup\cdots\cup O_{n+1}$ with the o-index $(1,j_{2},\ldots,j_{n},2)$.
Then for a sequence $I=1i_{2}i_{3}\cdots i_{n}2$ with $i_{k}=n+1$, the following holds:
\[
\sum_{(\e_{1},\e_{2},\ldots,\e_{n})\in\mathcal{E}(I)}
\left|\omu_{(O_{1}^{\e_{1}}\cup O_{2}^{\e_{2}}\cup\cdots\cup O_{n}^{\e_{n}})_{G_{r}}}(I\setminus\{n+1\})\right|=\Big\{
\begin{array}{rcl}
&1&\textrm{if}~ {1}j_{2}\cdots j_{n}2=I,\\
&0 \textrm{ or }  2&\textrm{otherwise}.
\end{array}
\]
\end{lemma}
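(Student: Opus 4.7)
The plan is to resolve the two boxes of $G_r$ using Habiro's box identities from~\cite{H}, express the surgery link $(O_{1}^{\e_{1}}\cup\cdots\cup O_{n}^{\e_{n}})_{G_{r}}$ as surgery along simple linear $C_{n-1}^d$-trees on the covering link, apply Theorem~\ref{thMilnor} to each surviving tree, and count the contributions as $\e$ ranges over $\mathcal{E}(I)$.

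First I would expand the two boxes of $G_r$ (Figure~\ref{F6}) via the box identities of clasper calculus in~\cite{H}. Each expansion replaces a box by two disk-leaves, one on each of the two preimages of the relevant component of $L$, so iterating over both boxes produces at most $2\times 2=4$ candidate simple linear $C_{n-1}^d$-trees in the cover. For a fixed covering link $O^{\e}=O_{1}^{\e_{1}}\cup\cdots\cup O_{n}^{\e_{n}}$, Lemma~\ref{prop3.4} eliminates every candidate with a disk-leaf that does not lie in $O^{\e}$, leaving only those simple $C_{n-1}^d$-trees whose leaves all land on components of $O^{\e}$.

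Next, for each surviving configuration I would read off its o-index. Since $T_r$ has o-index $(1,j_{2},\ldots,j_{n},2)$ and exactly one coordinate $j_{k'}$ equals $n+1$, and since the two boxes of $G_r$ come from the decomposition of $T_r$ around its disk-leaf grasping $O_{n+1}$, the unboxed simple $C_{n-1}^d$-tree inherits o-index $(1,j_{2},\ldots,\widehat{j_{k'}},\ldots,j_{n},2)$ regarded as a sequence in $\{1,\ldots,n\}$. By Theorem~\ref{thMilnor}, the absolute value of its contribution to $\omu(I\setminus\{n+1\})$ equals $1$ precisely when this o-index coincides with $I\setminus\{n+1\}=(1,i_{2},\ldots,\widehat{i_{k}},\ldots,i_{n},2)$, and equals $0$ otherwise.

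The last step is a combinatorial count. The non-box leaves of $G_r$ have their preimages forced by the $\e$-values, while the condition $\e_{i_{k-1}}=\e_{i_{k+1}}=0$ in $\mathcal{E}(I)$ is exactly the compatibility requirement for the two box resolutions adjacent to the branch leaf of $T_r$ to meet the chosen covering link. If $1j_{2}\cdots j_{n}2=I$, direct inspection shows that exactly one $\e\in\mathcal{E}(I)$ simultaneously satisfies all forced constraints and yields an unboxed tree with o-index $I\setminus\{n+1\}$, contributing $1$ to the sum. In every other case, any $\e$'s giving a non-zero contribution come in pairs related by a joint flip of a compatible pair of free $\e$-coordinates, so the total sum is $0$ or $2$. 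I expect the main obstacle to be this final step: making precise how the $\e$-coordinates control which unboxed trees survive, and explicitly verifying the pairing that collapses the sum to $0$ or $2$ whenever $T_r$'s o-index does not match $I$.
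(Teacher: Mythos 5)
Your outline follows the same route as the paper's proof: resolve the two boxes of $G_{r}$ so that, for each covering link $O^{\e}=O_{1}^{\e_{1}}\cup\cdots\cup O_{n}^{\e_{n}}$, Lemma~\ref{prop3.4} kills every candidate tree with a disk-leaf off $O^{\e}$ (the paper does this in the opposite order, first deleting the irrelevant lifts $T_{r1}^{\e}$, $T_{r2}^{\e'}$ by Lemma~\ref{prop3.4} and then removing the now-redundant boxes by move~2 of \cite{H}), obtain a single linear $C_{n-1}^{d}$-tree with o-index $(1,j_{2},\ldots,\widehat{j_{k'}},\ldots,j_{n},2)$, and apply Theorem~\ref{thMilnor}. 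Up to that point your reasoning matches the paper and is sound, including the observation that nonzero contributions can only occur when $1j_{2}\cdots j_{n}2\setminus\{n+1\}=I\setminus\{n+1\}$.

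The gap is exactly where you say you expect it: the count of surviving $\e\in\mathcal{E}(I)$ is asserted (``exactly one'' in the matching case, ``pairs'' otherwise) but not proved, and that count \emph{is} the content of the lemma. The paper closes it by locating the disk-leaves of $T_{r}$ meeting $O_{i_{k-1}}$ and $O_{i_{k+1}}$ relative to the leaf on $O_{n+1}$. If $1j_{2}\cdots j_{n}2=I$, these two leaves lie on opposite sides, i.e.\ one in $T_{r1}$ and one in $T_{r2}$; the constraint $\e_{i_{k-1}}=\e_{i_{k+1}}=0$ then forces the resolution of \emph{both} boxes, so exactly one $\e$ yields a tree with all leaves on $O^{\e}$, giving $1$. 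If the reduced sequences agree but the full sequences differ, both leaves lie on the same side, say in $T_{r1}$; then either neither lift $T_{r1}^{0}$, $T_{r1}^{1}$ has both of its leaves on $O_{i_{k-1}}^{0}\cup O_{i_{k+1}}^{0}$ (so every resolution dies and the sum is $0$), or exactly one lift does, in which case the $T_{r2}$-side resolution is unconstrained and exactly two $\e$'s survive, each contributing $1$ by Theorem~\ref{thMilnor} (sum $2$). Note that your ``pairing by a joint flip'' only describes the second subcase; the value $0$ in this branch arises not from an empty pairing in your sense but from the mismatch of lifts just described, so you would need to add that subcase explicitly to make the argument complete.
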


\begin{proof}
Suppose that ${1}j_{2}\cdots j_{n}2=I$.
If $(\e_{1},\e_{2},\ldots,\e_{n})\in\mathcal{E}(I)$,
then $\e_{i_{k-1}}=\e_{i_{k+1}}=0$, and hence  by 
Lemma~\ref{prop3.4} $(O_{1}^{\e_{1}}\cup\ O_{2}^{\e_{2}}\cup\cdots\cup O_{n}^{\e_{n}})_{G_{r}}$ 
is ambient isotopic to 
$(O_{1}^{\e_{1}}\cup\ O_{2}^{\e_{2}}\cup\cdots\cup O_{n}^{\e_{n}})_{(G_{r}\setminus (T_{r1}^{1}\cup T_{r2}^{1}))}$. 
Applying move 2 in \cite{H} to 
$G_{r}\setminus (T_{r1}^{1}\cup T_{r2}^{1})$, 
we have a linear $C_{n-1}^{d}$-tree $\widetilde{T_{r}}$ for $O_{1}^{\e_{1}}\cup O_{2}^{\e_{2}}\cup\cdots\cup O_{n}^{\e_{n}}$.
If there is an index $i\in\{1,2,\ldots,n\}$ such that $O_{i}^{\e_{i}}\cap \widetilde{T_{r}}=\emptyset$,
then $(O_{1}^{\e_{1}}\cup O_{2}^{\e_{2}}\cup\cdots\cup O_{n}^{\e_{n}})_{\widetilde{T_{r}}}$
is a trivial link by Lemma~\ref{prop3.4}.
Since there is a unique choice $(\e_1,\e_2,\ldots,\e_n)\in{\mathcal{E}}(I)$ such  that $O_{i}^{\e_{i}}\cap\widetilde{T_{r}}\neq\emptyset$ for any $i$, 
and since the o-index  of $\widetilde{T_{r}}$ is equal to $(1,i_{2},\ldots, i_{k-1},i_{k+1},\ldots, i_{n},2)$, 
then by Theorem~\ref{thMilnor} we have that
\[
\sum_{(\e_{1},\e_{2},\ldots,\e_{n})\in\mathcal{E}(I)}
\left|\omu_{(O_{1}^{\e_{1}}\cup O_{2}^{\e_{2}}\cup\cdots\cup O_{n}^{\e_{n}})_{G_r}}(I\setminus\{n+1\})\right|=1.\]

Suppose that ${1}j_{2}\cdots j_{n}2\neq I$ and ${1}j_{2}\cdots j_{n}2\setminus\{n+1\}=I\setminus\{n+1\}$.
If $(\e_{1},\e_{2},\ldots,\e_{n})\in\mathcal{E}(I)$, then 
$G_r\cap (O_{i_{k-1}}^0\cup O_{i_{k+1}}^0 )$ is contained in 
$T_{r1}^{0}\cup T_{r1}^{1}$ or $T_{r2}^{0}\cup T_{r2}^{1}$. 
Without loss of generality, we may assume that 
$T_{r1}^{0}\cup T_{r1}^{1}$ contains $G_r\cap (O_{i_{k-1}}^0\cup O_{i_{k+1}}^0 )$.
If both $T_{r1}^{0}$ and $T_{r1}^{1}$ intersect  $O_{i_{k-1}}^0\cup O_{i_{k+1}}^0$, then by Lemma~\ref{prop3.4}, 
$(O_{1}^{\e_{1}}\cup O_{2}^{\e_{2}}\cup\cdots\cup O_{n}^{\e_{n}})_{G_{r}}$ is  trivial.  It follows that 
\[
\sum_{(\e_{1},\e_{2},\ldots,\e_{n})\in\mathcal{E}(I)}
\left|\omu_{(O_{1}^{\e_{1}}\cup O_{2}^{\e_{2}}\cup\cdots\cup O_{n}^{\e_{n}})_{G_r}}(I\setminus\{n+1\})\right|=0.\]

\noindent
Suppose that either 
$T_{r1}^0$ or $T_{r1}^1$ contains $G_r\cap (O_{i_{k-1}}^0\cup O_{i_{k+1}}^0 )$. 
Here we may assume that $G_r\cap (O_{i_{k-1}}^0\cup O_{i_{k+1}}^0 )\subset T_{r1}^0$.  
Then $(O_{1}^{\e_{1}}\cup O_{2}^{\e_{2}}\cup\cdots\cup O_{n}^{\e_{n}})_{G_{r}}$ is ambient isotopic to 
 $(O_{1}^{\e_{1}}\cup O_{2}^{\e_{2}}\cup\cdots\cup O_{n}^{\e_{n}})_{G_{r}\setminus T_{r1}^{1}}$. 
 We note that by Lemma~\ref{prop3.4} there are exactly two possibilities  $(\e_1,\e_2,\ldots,\e_n)\in{\mathcal{E}}(I)$ such that 
  $(O_{1}^{\e_{1}}\cup O_{2}^{\e_{2}}\cup\cdots\cup O_{n}^{\e_{n}})_{G_{r}\setminus T_{r1}^{1}}$ 
 is a nontrivial link.  
 These two choice give us 
 $(O_{1}^{\e_{1}}\cup O_{2}^{\e_{2}}\cup\cdots\cup O_{n}^{\e_{n}})_{G_{r}\setminus T_{r1}^{1}}$ 
which  is ambient isotopic to either 
 $(O_{1}^{\e_{1}}\cup O_{2}^{\e_{2}}\cup\cdots\cup O_{n}^{\e_{n}})_{G_{r}\setminus (T_{r1}^{1}\cup T_{r2}^0)}$ or  
 $(O_{1}^{\e_{1}}\cup O_{2}^{\e_{2}}\cup\cdots\cup O_{n}^{\e_{n}})_{G_{r}\setminus (T_{r1}^{1}\cup T_{r2}^1)}$. 
 Applying move 2 in~\cite{H} to $G_{r}\setminus(T_{r1}^{1}\cup T_{r2}^{\e})$ for $\e\in\{0,1\}$, 
we obtain two linear $C_{n-1}^{d}$-trees with the o-indexes $(1,i_{2},\ldots, i_{k-1},i_{k+1},\ldots, i_{n},2)$. 
Hence Theorem~\ref{thMilnor} implies that 
\[
\sum_{(\e_{1},\e_{2},\ldots,\e_{n})\in\mathcal{E}(I)}
\left|\omu_{(O_{1}^{\e_{1}}\cup O_{2}^{\e_{2}}\cup\cdots\cup O_{n}^{\e_{n}})_{G_r}}(I\setminus\{n+1\})\right|=2.\]

Suppose that $1j_{2}\cdots j_{n}2\setminus\{n+1\}\neq I\setminus\{n+1\}$. 
Let $(\e_1,\e_2,\ldots,\e_n)\in {\mathcal{E}}(I)$ be a choice such that 
$(O_{1}^{\e_{1}}\cup O_{2}^{\e_{2}}\cup\cdots\cup O_{n}^{\e_{n}})_{G_{r}}$ is nontrivial, that is, by Lemma~\ref{prop3.4}
it is ambient isotopic to  
$(O_{1}^{\e_{1}}\cup O_{2}^{\e_{2}}\cup\cdots\cup O_{n}^{\e_{n}})_{G_{r}\setminus (T_{r1}^{\e}\cup T_{r2}^{\e'})}$ 
for some $\e,\e'\in {\mathbb{Z}}_2$. 
 Applying move 2 in~\cite{H} to $G_{r}\setminus(T_{r1}^{\e}\cup T_{r2}^{\e'})$, 
we obtain a linear $C_{n-1}^{d}$-tree whose the o-index is not equal to $(1,i_{2},\ldots, i_{k-1},i_{k+1},\ldots, i_{n},2)$. 
Hence Theorem~\ref{thMilnor} implies that 
\[
\sum_{(\e_{1},\e_{2},\ldots,\e_{n})\in\mathcal{E}(I)}
\left|\omu_{(O_{1}^{\e_{1}}\cup O_{2}^{\e_{2}}\cup\cdots\cup O_{n}^{\e_{n}})_{G_r}}(I\setminus\{n+1\})\right|=0.\]
\end{proof}

\begin{proof}[Proof of Theorem~{\rm \ref{thmod2}}]
It is shown in~\cite{Cochran} that 
the length-$l$ Milnor invariants are additive under the band sum operation
if all Milnor invariants with the length $\leq l-1$ vanish.
The boundary $\partial B_r$ of the regular neighborhood $B_r$ of $T_r$ 
can be assumed to be a decomposing sphere for a band sum operation between 
 $O_{T_r}$ and $O_{(T_1\cup T_{2}\cup\cdots\cup T_m)
 \setminus T_r}$. 
Hence we may assume that $L$ is a band sum of $m+1$ links $O_{T_1},O_{T_{2}},\ldots,O_{T_m}$ and $O$.
Since $O_{T_r}$ is an $(n+1)$-component Brunnian link for each $r(=1,2,\ldots,m)$,
all Milnor invariants with the length $\leq n$ vanish.
Therefore we have that
\[\omu_L(I)=\sum_{r=1}^m\omu_{O_{T_r}}(I).\]
By combining Theorem~\ref{thMilnor} and Lemma~\ref{lemmod2}, we have 
\[\sum_{r=1}^m\omu_{O_{T_r}}(I)\equiv 
\sum_{r=1}^m\sum_{(\e_{1},\e_{2},\ldots,\e_{n})\in\mathcal{E}(I)}
\omu_{(O_{1}^{\e_{1}}\cup O_{2}^{\e_{2}}\cup\cdots\cup O_{n}^{\e_{n}})_{G_r}}(I\setminus\{n+1\}) ~\mathrm{~~mod~}2.\]
Since $L(\e_{1}\e_{2}\cdots\e_{n})$ is also Brunnian, we have that 
\[\omu_{L(\e_{1}\e_{2}\cdots\e_{n})}(I\setminus\{n+1\})=
\sum_{r=1}^m\omu_{(O_{1}^{\e_{1}}\cup O_{2}^{\e_{2}}\cup\cdots\cup O_{n}^{\e_{n}})_{G_r}}(I\setminus\{n+1\}).\]
This completes the proof.
\end{proof}

\section{Link-homotopy}
Link-homotopy is an equivalence relation generated by crossing changes on the same component of links.
It is shown in~\cite{M} that 
Milnor invariants for non-repeated sequences are link-homotopy invariants of links.
However for covering Milnor invariants the same result does not hold.  

\begin{figure}[htbp]
\begin{center}
\begin{tabular}{c}
\begin{minipage}{0.4\hsize}
\begin{center}
 \begin{overpic}[width=40mm]{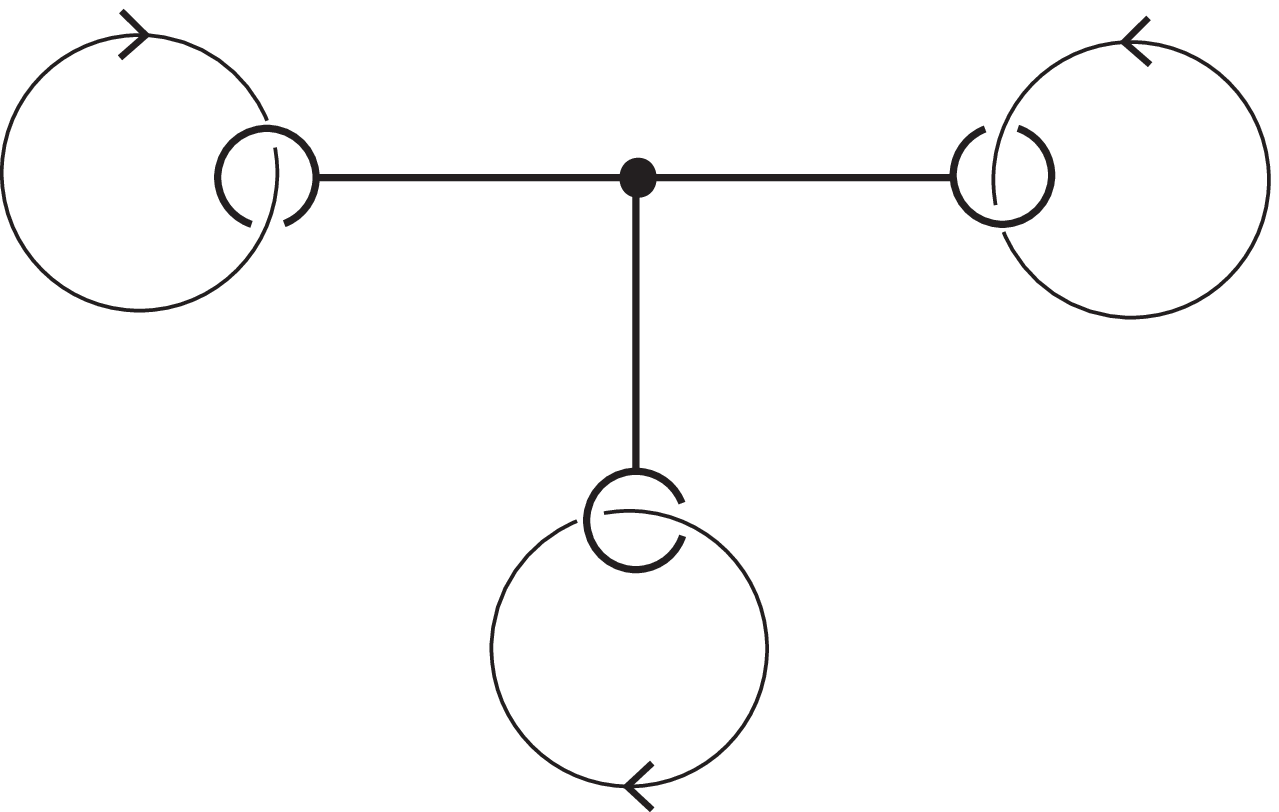}
  \linethickness{0.5pt}
   \put(146,29){\vector(-1,0){20}}
   \put(52,65){$T_{1}$}
   \put(-14,50){$O_{1}$}
   \put(117,50){$O_{2}$}
   \put(72,8){$O_{3}$}
   \put(52,-10){(a)}
 \end{overpic}
\end{center}
\end{minipage}

\begin{minipage}{0.6\hsize}
\vspace{7.5mm}\begin{center}
 \begin{overpic}[width=56mm]{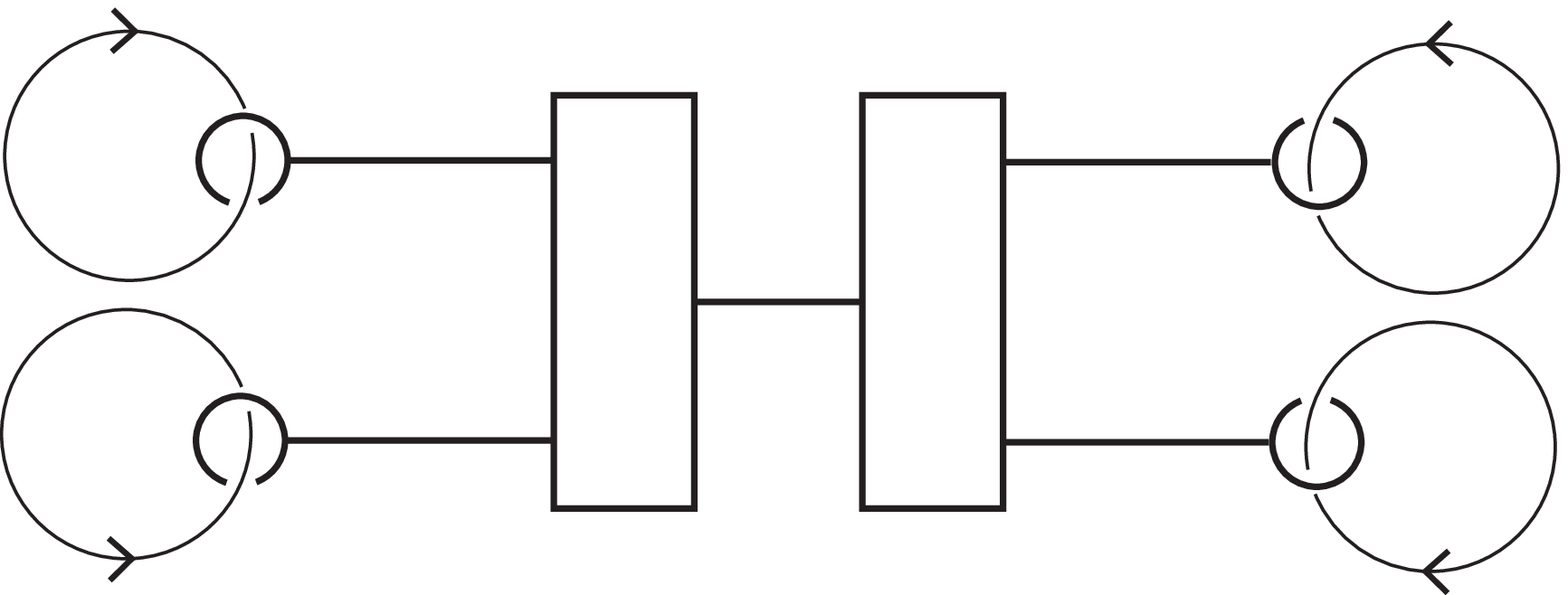}
  \linethickness{0.5pt}
   \put(-14,49){$O_{1}^{0}$}
   \put(-14,4){$O_{1}^{1}$}
   \put(161,49){$O_{2}^{0}$}
   \put(161,4){$O_{2}^{1}$}
   \put(74,-10){(b)}
 \end{overpic}
\end{center}
\end{minipage}
\end{tabular}
\end{center}
\caption{}
\label{ex1}
\end{figure}

\begin{figure}[htbp]
\begin{center}
\begin{tabular}{c}
\begin{minipage}{0.4\hsize}
\begin{center}
 \begin{overpic}[width=40mm]{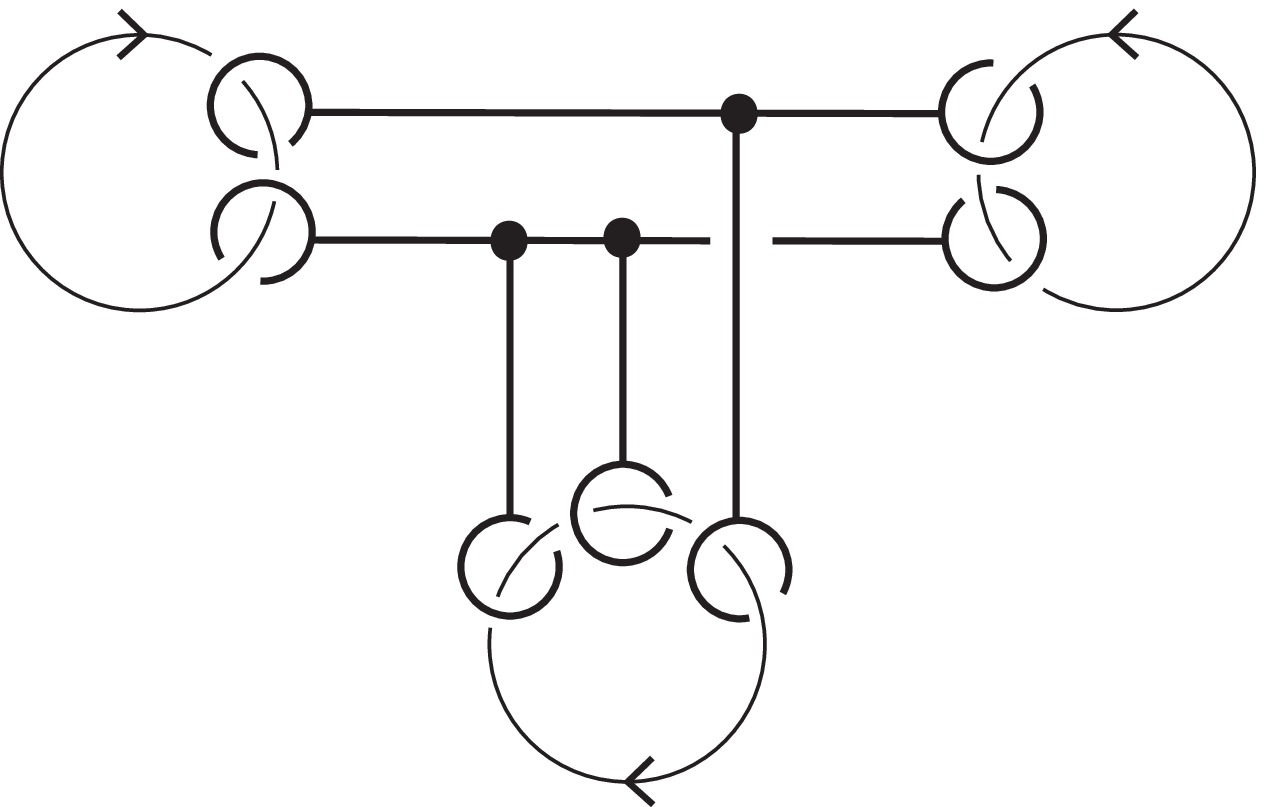}
   \linethickness{0.5pt}
   \put(145,31){\vector(-1,0){20}}
   \put(60,70){$T_{1}$}
   \put(33,35){$T_{2}$}
   \put(-14,51){$O_{1}$}
   \put(117,51){$O_{2}$}
   \put(73,7){$O_{3}$}
   \put(52,-10){(a)}
 \end{overpic}
\end{center}
\end{minipage}

\begin{minipage}{0.6\hsize}
\vspace{8.5mm}\begin{center}
 \begin{overpic}[width=56mm]{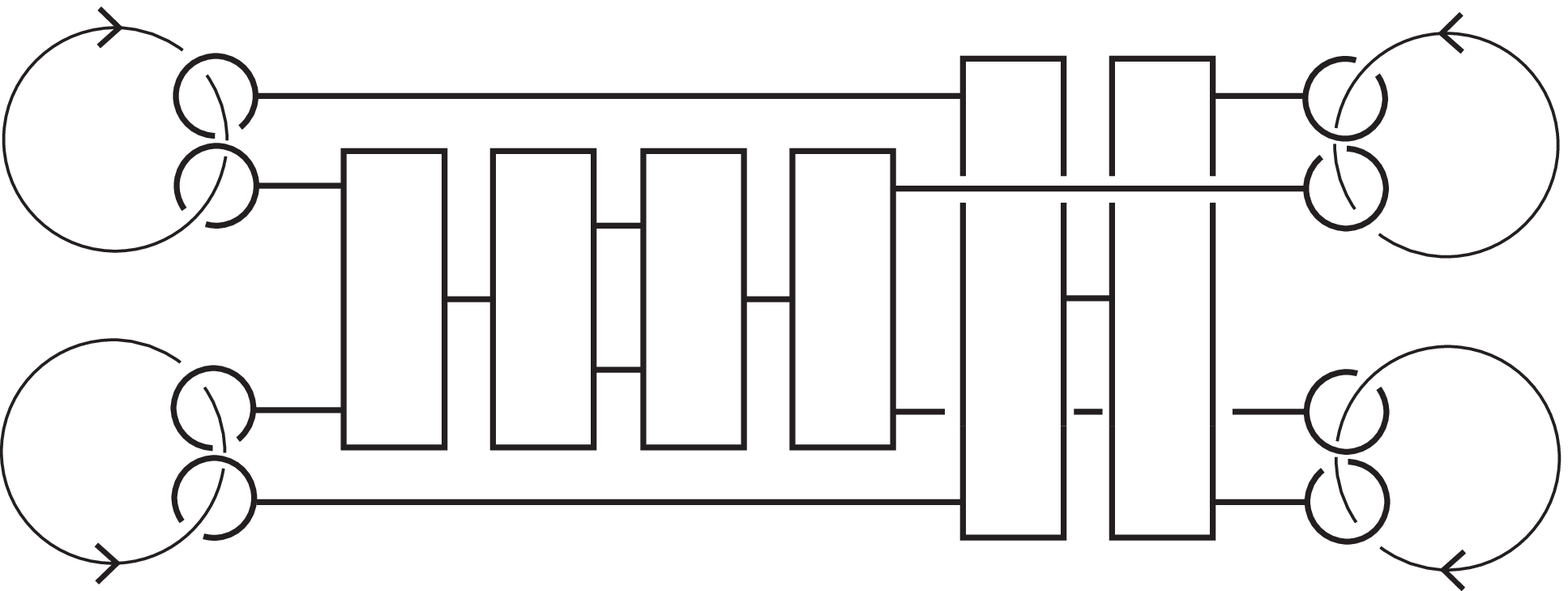}
  \put(-14,49){$O_{1}^{0}$}
  \put(-14,2){$O_{1}^{1}$}
  \put(161,49){$O_{2}^{0}$}
  \put(161,2){$O_{2}^{1}$}
  \put(74,-10){(b)}
 \end{overpic}
\end{center}
\end{minipage}
\end{tabular}
\end{center}
\caption{}
\label{ex2}
\end{figure}

\begin{example}
Let $O=O_{1}\cup O_{2}\cup O_{3}$ be an oriented $3$-component trivial link in $S^{3}$.
Let $L$ be a link which is obtained from $O$ by surgery along a linear $C_{2}^{d}$-tree $T_{1}$ as illustrated in Figure~\ref{ex1} (a),
and $L'$ a link which is obtained from $O$ by surgery along a disjoint union of $T_{1}$ and a simple linear $C_{3}$-tree $T_{2}$ as illustrated in Figure~\ref{ex2} (a). 
It is not hard to see that  $L$ and $L'$ are link-homotopic.

Since the both $L$ and $L'$ are Brunnian links,  
we have  the double branched cover of $S^{3}$ branched over $O_{3}$ which is a component of $L$ (resp. $L'$). 
Moreover we have surgery descriptions  as illustrated in Figure~\ref{ex1} (b) and Figure~\ref{ex2} (b).
Then covering links $L(00)$ and $L(01)$ of $L$ are links as illustrated in Figure~\ref{ex12}, and hence 
we have $\omu_{L(00)}(12)=1, \omu_{L(01)}(12)=-1$.
We conclude that $M_{L}(12)=\{1,-1\}$.
On the other hand,  covering links $L'(00)$ and $L'(01)$ of $L'$ are links as illustrated in Figure~\ref{ex22}.
Since $\omu_{L'(00)}(12)=3, \omu_{L'(01)}(12)=-3$,
we have that $M_{L'}(12)=\{3,-3\}$.
Therefore $L$ and $L'$ are link-homotopic, and $M_{L}(12)\neq M_{L'}(12)$.
\end{example}

\begin{figure}[htbp]
\begin{tabular}{lr}
\begin{minipage}{0.45\hsize}
\begin{center}
 \begin{overpic}[width=60mm]{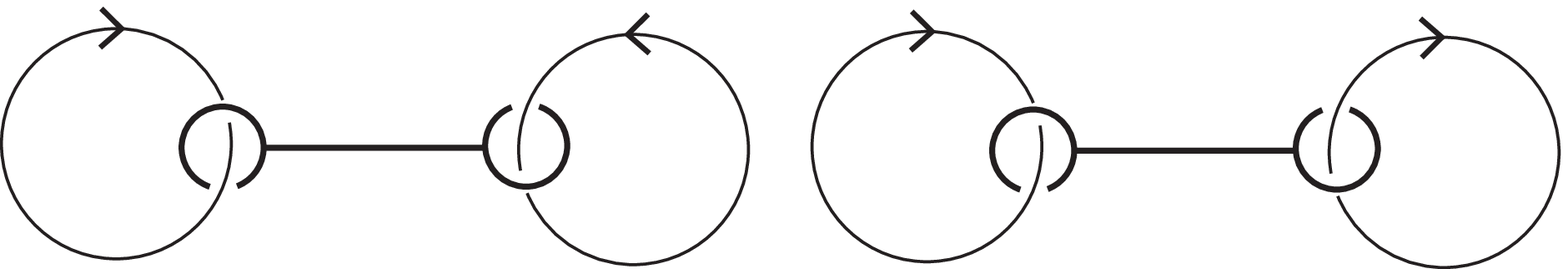}
  \put(30,-10){$L(00)$}
  \put(118,-10){$L(01)$}
 \end{overpic}
\caption{Covering~links~of~$L$}
\label{ex12}
\end{center}
\end{minipage}
\begin{minipage}{0.55\hsize}
\begin{center}
 \begin{overpic}[width=60mm]{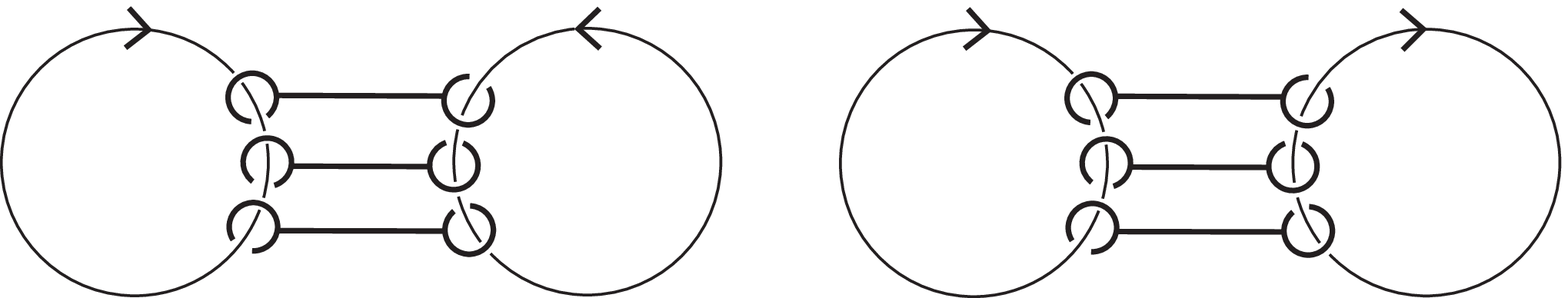}
  \put(30,-10){$L'(00)$}
  \put(119,-10){$L'(01)$}
 \end{overpic}
\caption{Covering~links~of~$L'$}
\label{ex22}
\end{center}
\end{minipage}
\end{tabular}
\end{figure}

\begin{remark}
\label{remstrong}
Since $L$ and $L'$ are Brunnian links and they are link-homotopic, 
$\omu_{L}(I)=\omu_{L'}(I)$ for any sequence $I$ with the length at most $3$. 
We note that $L$ is the Borromean rings. Hence 
$|\omu_L(123)|=|\omu_L(132)|=1$. 
It follows that  
for any sequence $I$ with the length at least $4$, 
$\Delta_{L}(I)_0=\Delta_{L'}(I)_0=\mathbb{Z}$ if each $i\in\{1,2,3\}$ appears in $I$, or 
$\mu_{L}(I)=\mu_{L'}(I)=0$ otherwise. 
In both cases, we have $\omu_{L}(I)=\omu_{L'}(I)=0$. 
This implies that $L$ and $L'$ have the same ordinary Milnor invariants,
and different covering Milnor invariants.
\end{remark}



\begin{thebibliography}{99}

\bibitem{AK} S. Akbulut; R. Kirby, {\it Branched covers of surfaces in $4$-manifolds}, 
Math. Ann.  252  (1979/80), 111--131. 

\bibitem{BS} C. Bankwitz; H.G. Schumann, {\it $\ddot{U}$ber viergeflechte} (German), Abh. Math. Sem. Univ. Hamburg 10 (1934), 263--284.

\bibitem{C} A.J. Casson, {\it Link cobordism and Milnor's invariant}, Bull. London Math. Soc. 7 (1975), 39--40. 

\bibitem{CG} A.J. Casson; C. McA. Gordon, {\it On slice knots in dimension three}, Algebraic and geometric topology (Proc. Sympos. Pure Math., Stanford Univ., Stanford, Calif., 1976), Part 2, pp. 39--53, Proc. Sympos. Pure Math., XXXII, Amer. Math. Soc., Providence, R,I., 1978.

\bibitem{Cochran} T.D. Cochran, {\it Concordance invariance of coefficients of Conway's lonk polynomial}, Invent. Math. 82 (1985), 527--541. 

\bibitem{CO} T.D. Cochran; K.E. Orr, {\it Homology cobordism and generalization of Milnor's invariants}, J. Knot Theory Ramifications 8 (1999), 429--436.

\bibitem{H} K. Habiro, {\it Claspers and finite type invariants of links}, Geom. Topol.  4  (2000), 1--83.  

\bibitem{H2} K. Habiro, {\it Brunnian links, claspers and Goussarov-Vassiliev finite type invariants}, Math. Proc. Cambridge Philos. Soc. 142 (2007), 459--468.

\bibitem{HM} R. Hartley; K. Murasugi, {\it Covering linkage invariants}, Canad. J. Math. 29 (1977), 1312--1339.

\bibitem{L} H.B. Laufer, {\it Some numerical link invariants}, Topology 10 (1971), 119--131.

\bibitem{Meilhan} J-B. Meilhan, {\it On surgery along Brunnian links in $3$-manifolds}, Algebr. Geom. Topol. 6 (2006), 2417--2453.

\bibitem{M} J. Milnor, {\it Link groups}, Ann. of Math. (2) 59 (1954), 177--195.

\bibitem{M2} J. Milnor, {\it Isotopy of links}, Algebraic geometry and topology, A symposium in honor of S. Lefschetz, pp. 280--306. Princeton University Press, Princeton, N. J. (1957).

\bibitem{MY} H. Miyazawa; A. Yasuhara, {\it Classification of n-component Brunnian links up to $C_n$-move}, Topology Appl.  153  (2006), 1643--1650.

\bibitem{Murasugi} K. Murasugi, {\it Nilpotent coverings of links and Milnor's invariant}, Low-dimensional topology (Chelwood Gate, 1982), 106--142, London Math. Soc. Lecture Note Ser., 95, Cambridge Univ. Press, Cambridge, 1985.

\bibitem{P} K. Perko, {\it On the classification of knots}, Proc. Amer. Math. Soc. 45 (1974), 262--266.

\bibitem{R} D. Rolfsen, {Knots and links}, Corrected reprint of the 1976 original. Mathematics Lecture Series, 7. Publish or Perish, Inc., Houston, TX, 1990.

\bibitem{S} J. Stallings, {\it Homology and central series of groups}, J. Algebra 2 (1965) 170--181.


\end{thebibliography}
\end{document}